\documentclass[12pt,a4paper]{amsart}
\usepackage{amssymb,amsmath}
\usepackage{graphicx}
\usepackage{amssymb}
\usepackage{pstricks}
\usepackage[all,dvips,arc,curve,color,frame]{xy}
\usepackage{calc}
\usepackage{hyperref}
\usepackage{tikz}
\usetikzlibrary{decorations.markings}
\tikzstyle{vertex}=[circle, draw, inner sep=0pt, minimum size=6pt]

\textwidth=16.00cm
\textheight=22.00cm
\topmargin=0.00cm
\oddsidemargin=0.00cm
\evensidemargin=0.00cm
\headheight=14.4pt
\headsep=1cm
\numberwithin{equation}{section}
\hyphenation{semi-stable}
\emergencystretch=10pt


\newtheorem{theorem}{Theorem}[section]
\newtheorem{lemma}[theorem]{Lemma}
\newtheorem{proposition}[theorem]{Proposition}
\newtheorem{corollary}[theorem]{Corollary}

\theoremstyle{definition}
\newtheorem{definition}[theorem]{Definition}
\newtheorem{definitions}[theorem]{Definitions}
\newtheorem{example}[theorem]{Example}
\newtheorem{examples}[theorem]{Examples}
\newtheorem{remark}[theorem]{Remark}

\newcommand{\vertex}{\node[vertex]}

\newcommand\Ass{\operatorname{Ass}}

\newcommand\Tor{\operatorname{Tor}}
\newcommand\Hom{\operatorname{Hom}}

\newcommand\Ext{\operatorname{Ext}}

\newcommand\reg{\operatorname{reg}}

\newcommand{\projdim}{\operatorname{proj\,dim}}

\begin{document}
\title[On the Betti numbers of some classes of Binomial edge ideals]{On the Betti numbers of some classes of Binomial edge ideals}
\author{Zohaib Zahid, Sohail Zafar}
\address{Abdus Salam School of Mathematical Sciences, GCU, Lahore Pakistan}
\email{Zohaib\_zahid@hotmail.com}

\address{UMT, Lahore Pakistan}
\email{sohailahmad04@gmail.com}
\thanks{This research was partially supported by the Higher Education
Commission, Pakistan}
\subjclass[2010]{05E40, 16E30}
\keywords{binomial edge ideal, Castelnuovo-Mumford regularity, Betti numbers}

\begin{abstract}
We study the Betti numbers of binomial edge ideal associated to some classes of graphs with large Castelnuovo-Mumford regularity. As an application we
give several lower bounds of the Castelnuovo-Mumford regularity of arbitrary graphs
depending on induced subgraphs.
\end{abstract}


\maketitle
\section{Introduction}
Let $K$ denote a field. Let $G$ denote a connected, simple and undirected graph over the
vertices labeled by $[n]=\{1,2,\dots,n\}.$ The binomial edge ideal $J_{G}\subseteq
S=K[x_{1},\dots,x_{n},y_{1},\dots,y_{n}]$ is an ideal generated by all binomials $
x_{i}y_{j}-x_{j}y_{i}$ , $i<j$ , such that $\{i,j\}$ is an edge of $G$. It was introduced in \cite{HKR} and independently at the same time in \cite{MO}. It is a natural generalization of the notion of monomial edge ideal which is introduced by Villarreal in \cite{Vi}.

The main purpose of this paper is to study the minimal free resolution of certain classes of binomial edge ideals. The arithmetic properties of binomial edge ideals in terms of combinatorial properties of graphs (and vice versa) were studied by many authors in \cite{HKR}, \cite{MO}, \cite{EHH}, \cite{So}, \cite{ps}, \cite{AR}, \cite{GR}, \cite{MM}, and \cite{SD}. The reduced Gr\H{o}bner basis and minimal primary decomposition of binomial edge ideal was given in the paper of Herzog et al. \cite{HKR}. The Cohen-Macaulay property of binomial edge ideal were studied in \cite{EHH}, \cite{AR} and \cite{GR}. As a certain generalization of the Cohen-Macaulay property the second author has studied approximately Cohen-Macaulay property in \cite{So}.

There is not so much work done so far in the direction of the Betti numbers and Castelnuovo-Mumford regularity of binomial edge ideals. The minimal free resolution of the binomial edge ideal of simplest classes (complete graph and line graph) is well known. In \cite{SD}, the authors
determine the initial Betti number of the binomial edge ideal of an arbitrary graph. In fact they shows that $\beta_{2,1}=2l$ where $l$ is the total number of 3-cycles in graph $G$. They also discussed the vanishing and non-vanishing of few Betti numbers. In \cite{ps}, there is a computation of the Castelnuovo-Mumford regularity and all the Betti numbers in the case of complete bipartite graph. The relationship between the Betti numbers of a graph and the Betti numbers of its induced subgraph (see Theorem \ref{p4}) was recently shown in \cite{MM}. They also gives the Castelnuovo-Mumford regularity bounds for binomial edge ideals, namely $\ell -1 \leq \reg (S/J_G) \leq n-1,$ where $\ell$ denotes the number of vertices of the longest induced line graph of $G$.

In the present paper we compute Castelnuovo-Mumford regularity and all the Betti numbers in the case of cycle graph and two more classes of graphs $\mathcal{T}_3$ and $\mathcal{G}_3$ (see Definitions \ref{d1} and \ref{d2}). In all of our classes Castelnuovo-Mumford regularity is quite large.  As an application of our investigation we improve the lower bounds for the Castelnuovo-Mumford regularity of an arbitrary graph by applying as it was done in \cite{MM}.

The paper is organized as follows:
In Section 2, we introduced some notations and give some results that we need in the rest of the paper. In particular we give a short summary on minimal free resolutions. In Section 3, we compute Castelnuovo-Mumford regularity and Betti numbers of the binomial edge ideal associated of a cycle graph and obtained a lower bound for the Castelnuovo-Mumford regularity of an arbitrary graph. In Section 4, we do the same for the classes of graphs $\mathcal{T}_3$ and $\mathcal{G}_3$ as we did for the cycle in Section 3.

\section{Preliminaries}
In this section we will introduce the notation used in the article. Moreover we
summarize a few auxiliary results that we need.

We denote by $G$ a connected undirected graph on $n$ vertices labeled by
$[n] = \{1,2,\ldots,n\}$. For an arbitrary field $K$ let $S = K[x_1,\dots,x_n,y_1,\dots,y_n]$
denote the polynomial ring in the $2n$ variables. To the
graph $G$ one can associate an ideal $J_G \subset S$ generated by all binomials
$x_iy_j-x_jy_i$ for $i<j$ such that $\{i,j\}$ forms an edge of $G$. This Ideal $J_G$ is called \textbf{binomial edge ideal} associated to the graph $G$.
This construction was invented by Herzog et al. in \cite{HKR} and independently found in \cite{MO}. At first let us
recall some of their definitions.

\begin{definition} \label{p1} Fix the previous notation. For a set $T \subset [n]$
let $\tilde{G}_T$ denote the complete graph on the vertex set $T$. Moreover
let $G_{[n]\setminus{T}}$ denote the graph obtained by deleting all vertices
of $G$ that belong to $T$.

Let $c = c(T)$ denote the number of connected components of $G_{[n]\setminus{T}}$.
Let $G_1,\ldots,G_c$ denote the connected components of $G_{[n]\setminus{T}}$. Then define
\[
P_{T}(G)=(\cup _{i\in T}\{x_{i},y_{i}\},J_{\tilde{G}_{1}},\dots,J_{\tilde{G}%
_{C(T)}}),
\]
where $\tilde{G}_i, i =1,\ldots,c,$ denotes the complete graph on the vertex set
of the connected component $G_i, i = 1,\ldots,c$.
\end{definition}

The following result is important for the understanding of the binomial edge ideal
of $G$.

\begin{lemma} \label{p2}
With the previous notation the following holds:
\begin{itemize}
\item[(a)] $P_{T}(G)\subset S$
is a prime ideal of height $n-c+|T|,$ where $|T|$ denotes the number of
elements of $T$.
\item[(b)] $J_{G}=\cap _{T \subseteq [n]}P_{T}(G).$
\item[(c)] $J_{G}\subset P_{T}(G)$ is a
minimal prime if and only if either  $T=\emptyset $ \ or $T\neq \emptyset $
and $c(T \setminus \{i\})<c(T)$  for each $i \in T$.
\end{itemize}
\end{lemma}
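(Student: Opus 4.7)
The plan is to prove the three parts in sequence, with part (a) being the structural starting point, part (b) the intersection identity, and part (c) the combinatorial characterization of minimal primes.

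For part (a), my strategy is to quotient $S$ by the ideal $(x_i, y_i : i \in T)$, obtaining a polynomial ring in $2(n-|T|)$ variables in which the remaining generators split by component. Since the generators of $J_{\tilde{G}_j}$ involve only variables indexed by the vertex set $V(G_j)$, and these vertex sets are disjoint over $j = 1,\ldots,c$, the images of $J_{\tilde{G}_1},\ldots,J_{\tilde{G}_c}$ lie in disjoint sets of variables. Each $J_{\tilde{G}_j}$ is the ideal of $2\times 2$ minors of the generic $2 \times |V(G_j)|$ matrix, hence a prime ideal of height $|V(G_j)|-1$ whose quotient is a well-known Cohen--Macaulay domain. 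The full quotient $S/P_T(G)$ factors as a tensor product over $K$ of these domains and is therefore a domain itself. Summing contributions, the height is $2|T| + \sum_{j}(|V(G_j)|-1) = 2|T| + (n-|T|) - c = n - c + |T|$.

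For part (b), the inclusion $J_G \subseteq \bigcap_T P_T(G)$ is direct: given a generator $x_iy_j - x_jy_i$ with $\{i,j\} \in E(G)$ and any $T$, either one of $i,j$ lies in $T$ (so the binomial lies in the variable part of $P_T(G)$) or both $i,j \notin T$, in which case $\{i,j\}$ remains an edge of $G_{[n]\setminus T}$ and $i,j$ belong to a common component $G_k$, so the binomial lies in $J_{\tilde{G}_k} \subseteq P_T(G)$. The reverse inclusion is the substantive step, and I would obtain it by invoking the reduced Gr\"obner basis of $J_G$ with respect to the lexicographic order from \cite{HKR}: its elements are indexed by admissible paths, the initial ideal is squarefree, and computing its primary decomposition shows that each minimal prime of $\operatorname{in}_<(J_G)$ is the initial ideal of some $P_T(G)$. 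A standard argument then lifts this to the decomposition $J_G = \bigcap_T P_T(G)$.

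For part (c), the direction ``not minimal $\Rightarrow$ some $i$ fails the condition'' follows by contraposition: I want to show that if there exists $i \in T$ with $c(T\setminus\{i\}) \geq c(T)$, then $P_{T\setminus\{i\}} \subsetneq P_T$. Combinatorially, $c(T\setminus\{i\}) \geq c(T)$ means $i$'s neighbors in $G_{[n]\setminus T}$ lie in at most one component, so restoring $i$ either produces a new isolated component (then $c$ increases by $1$) or merges with exactly one existing component $G_k$ (then $c$ is preserved). In either case I can match generators: the variables $\{x_j, y_j : j \in T\setminus\{i\}\}$ clearly lie in $P_T$; the ideal $J_{\widetilde{G_k \cup \{i\}}}$ (respectively $J_{\tilde{G}_j}$ for the other components) lies in $P_T$ because $x_i, y_i \in P_T$ absorb any extra generators $x_i y_m - x_m y_i$. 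Strictness follows from the height computation in (a), since $\operatorname{height} P_{T\setminus\{i\}} < \operatorname{height} P_T$ in the first case, and equality together with $P_{T\setminus\{i\}} \subseteq P_T$ combined with primality forces equality of ideals, contradicting $x_i \in P_T \setminus P_{T\setminus\{i\}}$, in the second.

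The converse direction of (c), namely that the condition $c(T\setminus\{i\})<c(T)$ for all $i \in T$ implies minimality, will be the main obstacle. Here I would suppose $P_{T'} \subseteq P_T$ for some $T' \neq T$ and derive a contradiction by a case analysis on the symmetric difference $T \triangle T'$: using part (a), the height inequality $\operatorname{height} P_{T'} \leq \operatorname{height} P_T$ constrains $|T'| - c(T') \leq |T| - c(T)$, while the containment of generators forces a compatibility between the component structures of $G_{[n]\setminus T}$ and $G_{[n]\setminus T'}$; combined with the hypothesis that each $i \in T$ strictly increases the component count, one finds that no such proper containment is possible. This last step is delicate and is where I expect the argument to require the most careful bookkeeping.
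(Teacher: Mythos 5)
The paper does not supply its own argument here: it simply cites Section~3 of \cite{HKR}, so there is no internal proof to match your sketch against. Judged on its own terms, your proposal is sound for part (a) (the quotient $S/P_T(G)$ is a tensor product of determinantal domains over $K$, and the height count $2|T|+\sum_j(|V(G_j)|-1)=n-c+|T|$ is exactly right) and for the easy inclusion in (b), as well as for the direction of (c) showing that a vertex $i\in T$ with $c(T\setminus\{i\})\geq c(T)$ yields $P_{T\setminus\{i\}}(G)\subsetneq P_T(G)$, hence non-minimality.

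However, there are two genuine gaps. First, in (b) the key step is stated incorrectly: the minimal primes of $\operatorname{in}_<(J_G)$ are monomial primes generated by variables, whereas $\operatorname{in}_<(P_T(G))$ contains the monomials $x_iy_j$ without their factors and is never prime, so the claim that ``each minimal prime of $\operatorname{in}_<(J_G)$ is the initial ideal of some $P_T(G)$'' cannot be the lifting mechanism. The argument in \cite{HKR} uses the quadratic Gr\"obner basis only to conclude that $\operatorname{in}_<(J_G)$ is squarefree, hence $J_G$ is radical; the identification $J_G=\bigcap_T P_T(G)$ then requires a separate geometric step (showing every point of $V(J_G)$ lies in some $V(P_T(G))$, by taking $T$ to be the set of coordinates where both $x_i$ and $y_i$ vanish and checking the rank conditions componentwise), which your sketch does not supply. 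Second, in (c) the substantive direction --- that $c(T\setminus\{i\})<c(T)$ for all $i\in T$ forces $P_T(G)$ to be minimal over $J_G$ --- is not proved; you explicitly defer it to ``careful bookkeeping.'' The missing ingredients are (i) the observation that $P_{T'}(G)\subseteq P_T(G)$ forces $T'\subseteq T$ (since $x_i\in P_T(G)$ iff $i\in T$), and (ii) a comparison of component structures showing that for $T'\subsetneq T$ the generators of $P_{T'}(G)$ corresponding to a component of $G_{[n]\setminus T'}$ that meets $T\setminus T'$ cannot all lie in $P_T(G)$ when every vertex of $T$ is a cut vertex in the sense of the stated condition. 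Without these, the characterization of minimal primes --- the part of the lemma the paper actually uses later (via Proposition~\ref{acc} and Lemma~\ref{b1}) --- remains unestablished.
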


\begin{proof}
For the proof we refer to Section 3 of the paper \cite{HKR}.
\end{proof}

Therefore $J_{G}$ is the intersection of prime ideals. That is, $S/J_{G}$ is
a reduced ring. Moreover, we remark that $J_G$ is an ideal generated by quadrics
and therefore homogeneous, so that $S/J_{G}$ is a graded ring with natural grading induced
by the $\mathbb{N}$-grading of $S$.
\begin{remark} If we define a grading on $S$ by setting $\deg x_i=\deg y_i=e_i$ where $e_i$ is the $i-$th unit vector of $\mathbb{N}^n$ then it is easy to see that $S/J_{G}$ is $\mathbb{N}^n$-graded too.
\end{remark}
Let $M$ a graded finitely generated $S$-module. By Hilbert syzygy Theorem, $M$ has a finite minimal graded free resolution:
\[
F_{\bullet}: 0\to F_p\to \cdots \to F_1\to F_0\to M \to 0
\]
where $F_i=\bigoplus_jS(-d_{ij})^{\beta_{ij}}$ for $i\geq 0$ and $p$ is called the \textbf{projective dimension} of $M$.
The numbers $\beta_{ij}$ are uniquely determined by $M$ i.e. $\beta_{i,j}(M) = \dim_K \Tor_i^S(K,M)_{i+j}, i,j \in \mathbb{Z},$ as \textbf{graded Betti numbers} of $M$. We can also define \textbf{Castelnuovo-Mumford regularity} $\reg M = \max \{j \in \mathbb{Z} | \beta_{i,j}(M) \not= 0\}$. The \textbf{Betti table} looks as in the following:
\[
\begin{array}{c|ccccc}
   & 0 & 1  & \cdots & p \\
\hline
0 & \beta_{0,0} & \beta_{1,0} & \cdots & \beta_{p,0} \\
1 & \beta_{0,1} & \beta_{1,1} & \cdots & \beta_{p,1} \\
\vdots & \vdots & \vdots & \vdots \\
r & \beta_{0,r} & \beta_{1,r} & \cdots & \beta_{p,r}
\end{array}
\] Note that all the $\beta_{i,j}$ outside
of the Betti table are zero. For more details and related facts we refer the book of Burns and Herzog \cite{BH}.
\begin{definition} \label{p3} Let $M$ denote a finitely generated graded
$S$-module and $d = \dim M$. For an integer $i \in \mathbb{Z}$ put
\[
\omega^i(M) = \Ext_S^{2n-i}(M,S(-2n))
\]
and call it the \textbf{$i$-th module of deficiency}. Moreover we define $\omega(M) = \omega^d(M)$
the \textbf{canonical module} of $M$.
\end{definition}
These modules have been introduced and studied
in \cite{Sch1}.
\begin{theorem}\label{p5} Let $M$ denote a finitely generated graded Cohen-Macaulay $S$-module of projective dimension $p = 2n - \dim M$. Let
\[
F_{\bullet}: 0\to F_p\to \cdots \to F_1\to F_0\to 0
\] be the minimal free resolution of $M$. Let $G_{\bullet}=\Hom_S(F_{\bullet},S(-2n))$ be the dual complex
\[
G_{\bullet}: 0\to G_p\to \cdots \to G_1\to G_0\to 0
\] where $G_i=F_{p-i}$ for i=0,\ldots,p. Then $G_{\bullet}$ is the minimal free resolution of $\omega(M).$
\end{theorem}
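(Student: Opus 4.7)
The plan is to unpack the single sentence ``$G_\bullet$ is the minimal free resolution of $\omega(M)$'' into three assertions and verify each: (i) $G_\bullet$ is exact in positive homological degrees, (ii) the cokernel of $G_1 \to G_0$ equals $\omega(M)$, and (iii) $G_\bullet$ is minimal.

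The heart of the matter is showing that $\Ext_S^i(M,S(-2n)) = 0$ for all $i \neq p$. Since $S$ is a polynomial ring in $2n$ variables, it is Cohen--Macaulay of dimension $2n$, and the Auslander--Buchsbaum formula gives $\projdim_S M + \depth M = 2n$. The Cohen--Macaulay hypothesis on $M$ supplies $\depth M = \dim M = 2n-p$, hence $\grade(M,S) = \projdim_S M = p$. A standard result (see, e.g., \cite{BH}) then forces $\Ext_S^i(M,S) = 0$ for $i \neq p$, and this vanishing is unaffected by the shift $-2n$. Now applying $\Hom_S(-,S(-2n))$ to the projective resolution $F_\bullet$ yields a complex whose cohomology at homological position $i$ is exactly $\Ext_S^i(M,S(-2n))$. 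Under the reindexing $G_i := \Hom_S(F_{p-i},S(-2n))$ of the statement, vanishing of cohomology for all $i \neq p$ translates to exactness of $G_\bullet$ at every spot except $G_0$, while at $G_0$ the cokernel of $G_1 \to G_0$ is $\Ext_S^p(M,S(-2n)) = \omega^{2n-p}(M) = \omega^{\dim M}(M) = \omega(M)$ by Definition~\ref{p3}. Thus $G_\bullet$ is a graded free resolution of $\omega(M)$.

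For minimality, observe that the differentials $F_i \to F_{i-1}$ of the minimal resolution $F_\bullet$ are represented, in appropriate graded bases, by matrices with entries in the graded maximal ideal $\mathfrak m = (x_1,\ldots,x_n,y_1,\ldots,y_n)$. The differentials of $G_\bullet$ are the $S$-linear duals of these maps; expressed in the dual bases they are given by the transposed matrices, whose entries agree with the originals and therefore also lie in $\mathfrak m$. Hence $G_\bullet$ is a \emph{minimal} graded free resolution of $\omega(M)$. The only substantive step is the $\Ext$-vanishing of the second paragraph, which is precisely where the Cohen--Macaulay hypothesis enters; the identification of the cokernel and the minimality assertion are then formal consequences of dualizing a minimal free resolution.
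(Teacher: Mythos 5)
Your argument is correct, but it is worth noting that the paper does not actually prove this statement at all: it simply cites \cite[Corollary 3.3.9]{BH}, so your proposal replaces a black-box reference by a self-contained argument, and it is essentially the standard proof of that corollary. The decisive point, as you say, is the vanishing $\Ext_S^i(M,S)=0$ for $i\neq p$, i.e.\ the perfectness of a Cohen--Macaulay module over the regular ring $S$: for $i>p$ this is trivial since $\projdim_S M=p$, and for $i<p$ it follows because $\grade(\Ann M,S)=2n-\dim M$ (as $S$ is Cohen--Macaulay) equals $2n-\depth M=\projdim_S M$ by Auslander--Buchsbaum together with the Cohen--Macaulay hypothesis $\depth M=\dim M$; your sentence ``$\grade(M,S)=\projdim_S M=p$'' compresses exactly this chain, and spelling it out would make the step airtight. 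The identification of the cokernel of $G_1\to G_0$ with $\Ext_S^p(M,S(-2n))=\omega(M)$ under the reindexing $G_i=\Hom_S(F_{p-i},S(-2n))$, and the minimality of $G_\bullet$ because the dual differentials are transposed matrices with the same entries in the graded maximal ideal, are both correct and formal, as you observe. One small service your write-up performs beyond the citation: it makes explicit that $G_i$ must be read as the twisted dual $\Hom_S(F_{p-i},S(-2n))$ rather than literally $F_{p-i}$ as the paper's display suggests. The trade-off is the usual one: the citation is shorter, while your argument keeps the paper self-contained at the cost of invoking the Rees-theorem characterization of grade and the Auslander--Buchsbaum formula.
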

\begin{proof}
For the proof see \cite[Corollary 3.3.9]{BH}.
\end{proof}
For a subset $W\subset[n]$, a graph $G_W$ on vertex set $W$ is called induced subgraph of $G$ if for all $i,j\in W$, $\{i,j\}$ is an edge of $G_W$ if and only if $\{i,j\}$ is an edge of $G$. Recently K. Matsuda and S. Murai in \cite{MM} proved the relationship between the Betti numbers of the graph with the Betti numbers of its induced subgraph. The result is as follows:
\begin{theorem}\label{p4}
Let $G_W$ be the induced subgraph of $G$. Then $\beta_{i,j}(S/J_G)\geq \beta_{i,j}(S/J_{G_W})$ for all $i,j$.
\end{theorem}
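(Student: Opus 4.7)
\smallskip
\noindent\textbf{Proof plan.} The plan is to lift the statement to the multigraded setting and show that the multigraded Betti numbers of $S/J_G$ and $S/J_{G_W}$ actually \emph{agree} on multidegrees supported on $W$; the $\mathbb{Z}$-graded inequality then falls out by summing.

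Endow $S$ with the fine $\mathbb{Z}^n$-grading $\deg x_i=\deg y_i=e_i$. Both $J_G$ and $J_{G_W}$ are multigraded, and the $\mathbb{Z}$-graded Betti numbers refine to multigraded ones via
\[
\beta_{i,j}(M)=\sum_{|\alpha|=i+j}\beta_{i,\alpha}(M),\qquad\beta_{i,\alpha}(M)=\dim_K\Tor_i^S(M,K)_\alpha.
\]
Set $U=[n]\setminus W$ and call $\alpha\in\mathbb{N}^n$ \emph{supported on $W$} when $\alpha_k=0$ for every $k\in U$.

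The heart of the argument is the identity $\beta_{i,\alpha}(S/J_G)=\beta_{i,\alpha}(S/J_{G_W})$ for every $\alpha$ supported on $W$. This rests on two observations. First, for every $\gamma$ supported on $W$ the multigraded pieces already coincide, $(S/J_G)_\gamma=(S/J_{G_W})_\gamma$: a monomial of $S$ of such multidegree automatically lies in $S_W:=K[x_i,y_i:i\in W]$, and among the generators $x_iy_j-x_jy_i$ of $J_G$ only those with $\{i,j\}\subseteq W$---that is, the generators of $J_{G_W}$---can be multiplied by a monomial to produce an element whose multidegree is supported on $W$. Second, in the Koszul complex $K(\xx,\yy;S)\otimes_S(S/J_G)$ that computes $\Tor_*^S(S/J_G,K)$, the multigraded piece in degree $\alpha$ (supported on $W$) is supported on wedge generators $e_T$ with $T\subseteq\{x_i,y_i:i\in W\}$, because any factor $dx_k$ or $dy_k$ with $k\in U$ would push the multidegree outside $W$. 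Combining these, the $\alpha$-piece of this Koszul complex depends only on the pieces $(S/J_G)_\gamma$ with $\gamma\le\alpha$, each of which agrees with the corresponding piece of $S/J_{G_W}$. Thus the chain complexes in multidegree $\alpha$ coincide, and so do their homologies.

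Since $J_{G_W}$ involves only variables indexed by $W$, $\beta_{i,\alpha}(S/J_{G_W})=0$ whenever $\alpha$ is not supported on $W$. Summing over $|\alpha|=i+j$ therefore yields
\[
\beta_{i,j}(S/J_G)=\sum_{|\alpha|=i+j}\beta_{i,\alpha}(S/J_G)\;\geq\;\sum_{\substack{|\alpha|=i+j\\ \alpha\text{ supp.\ on }W}}\beta_{i,\alpha}(S/J_G)=\sum_{|\alpha|=i+j}\beta_{i,\alpha}(S/J_{G_W})=\beta_{i,j}(S/J_{G_W}),
\]
which is the desired inequality.

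\smallskip
\noindent The main obstacle, and the step I expect to require the most care, is the multidegree analysis of the Koszul complex. One must check that the ``extra'' variables $x_k,y_k$ with $k\in U$, and the ``extra'' generators of $J_G$ corresponding to edges incident to $U$, contribute nothing to $\Tor^S$ in multidegrees supported on $W$. Once that localization is in place, the single-graded inequality is forced by simply discarding the nonnegative contributions from multidegrees whose support reaches outside $W$.
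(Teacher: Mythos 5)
Your argument is correct: restricting the Koszul complex on all $2n$ variables to $\mathbb{Z}^n$-multidegrees supported on $W$, together with the identification $(S/J_G)_\gamma=(S/J_{G_W})_\gamma$ for such multidegrees (which is exactly where the inducedness of $G_W$ enters, since it guarantees that the only generators $x_iy_j-x_jy_i$ of $J_G$ reaching these degrees are those of $J_{G_W}$), yields $\beta_{i,\alpha}(S/J_G)=\beta_{i,\alpha}(S/J_{G_W})$ and hence the stated inequality after summing over $|\alpha|=i+j$. The paper gives no argument of its own---it simply cites \cite[Corollary 2.2]{MM}---and your multigraded restriction argument is essentially the proof given there.
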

\begin{proof}
For the proof see \cite[Corollary 2.2]{MM}.
\end{proof}
\section{Betti Numbers of the binomial edge ideal of a cycle}
\begin{definition}
A \textbf{cycle} is a graph in which all the vertices are of degree 2.
\end{definition}
In particular, for $n=3$ it is triangle and $n=4$ it is square.
We denote the cycle on vertex set $[n]$ by $C$ and its binomial edge ideal by $I_{C}$. It is known from \cite[Theorem 4.5]{So} that, $S/I_{C}$ is approximately Cohen-Macaulay ring of $\dim(S/I_C)=n+1$.
\begin{theorem}\label{c1}Let $I_{L}$ be the binomial edge ideal of a
line $L$ on vertex set $[n]$ , $g=x_{1}y_{n}-x_{n}y_{1}$ and $J_{\tilde{G}}$
be binomial edge ideal of a complete graph on $[n]$ then:
\begin{enumerate}
  \item [(a)] $I_L:g/I_L\cong \omega (S/J_{\tilde{G}})(2)$
  \item [(b)] The Hilbert series of $S/I_C$ is \[H(S/I_{C},t)=\frac{1}{(1-t)^{n+1}}((1+t)^{n-1}-t^{2}(1+t)^{n-1}+(n-1)t^{n}+t^{n+1}).\]
\end{enumerate}

\end{theorem}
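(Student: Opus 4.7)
The argument for (a) rests on the observation that $S/I_L$ is a Gorenstein complete intersection, which lets us realize the colon $(I_L:g)/I_L$ as a canonical module via standard duality; (b) then follows from the short exact sequence induced by multiplication by $g$.

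For (a), I first note that $I_L$ is a complete intersection: it is generated by the $n-1$ quadrics $x_iy_{i+1}-x_{i+1}y_i$, and by Lemma~\ref{p2}(a) every minimal prime $P_T(L)$ has height $n-1$ (removing $|T|$ interior vertices from the line produces $|T|+1$ components). Hence $S/I_L$ is Gorenstein of dimension $n+1$ with canonical module $\omega_{S/I_L}=(S/I_L)(-2)$. From Lemma~\ref{p2}(b) together with the fact that the endpoints $1$ and $n$ of $L$ lie in a common connected component of $L\setminus T$ only when $T=\emptyset$, I conclude that $g\in P_T(L)$ iff $T=\emptyset$, so $(I_L:g)=\bigcap_{T\neq\emptyset}P_T(L)$. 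The same kind of case analysis applied to each non-edge generator $h=x_iy_j-x_jy_i$ of $J_{\tilde G}$ shows $\{T:h\notin P_T(L)\}\subseteq\{T:g\notin P_T(L)\}$, hence $(I_L:h)\supseteq(I_L:g)$; intersecting over all non-edges yields $(I_L:J_{\tilde G})=(I_L:g)$.

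Now I apply duality over the Gorenstein ring $R=S/I_L$: since $S/J_{\tilde G}$ is Cohen--Macaulay of the same dimension $n+1$ as $R$, the standard identity gives
\[
\omega(S/J_{\tilde G})=\Hom_R(S/J_{\tilde G},\omega_R)=\Hom_R(S/J_{\tilde G},R)(-2),
\]
and cyclicity of $S/J_{\tilde G}$ identifies $\Hom_R(S/J_{\tilde G},R)$ with $\Ann_R(J_{\tilde G}/I_L)=(I_L:J_{\tilde G})/I_L$. Combined with the previous paragraph, this yields $(I_L:g)/I_L\cong\omega(S/J_{\tilde G})(2)$.

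For (b), multiplication by $g$ on $S/I_L$ produces the short exact sequence
\[
0\to (S/(I_L:g))(-2)\to S/I_L\to S/I_C\to 0,
\]
so $H(S/I_C,t)=(1-t^2)H(S/I_L,t)+t^2H((I_L:g)/I_L,t)$. Since $I_L$ is a c.i.\ of $n-1$ quadrics in $2n$ variables, $H(S/I_L,t)=(1+t)^{n-1}/(1-t)^{n+1}$. The Eagon--Northcott resolution of the ideal of $2\times 2$ minors of a generic $2\times n$ matrix gives $H(S/J_{\tilde G},t)=(1+(n-1)t)/(1-t)^{n+1}$, and applying the canonical-module duality $H(\omega(M),t)=(-1)^{\dim M}H(M,t^{-1})$ together with part (a) yields $H((I_L:g)/I_L,t)=t^{n-2}(t+n-1)/(1-t)^{n+1}$. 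Substituting and collecting terms produces the claimed expression. The main obstacle in the whole argument is the colon-ideal identification $(I_L:J_{\tilde G})=(I_L:g)$, requiring the case analysis of which $P_T(L)$ contain each non-edge generator of $J_{\tilde G}$; once that is in hand, the duality step and the Hilbert series manipulations are routine.
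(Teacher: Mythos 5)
Your argument is correct, and it is worth noting that the paper itself does not prove Theorem \ref{c1} at all: it simply cites \cite[Lemma 4.8 and Theorem 4.10]{So}. So what you have produced is a self-contained substitute for that citation, built on complete-intersection linkage: $I_L$ is a height $n-1$ ideal generated by $n-1$ quadrics, so $S/I_L$ is Gorenstein with $\omega_{S/I_L}\cong (S/I_L)(-2)$, and then $\omega(S/J_{\tilde G})\cong \Hom_{S/I_L}(S/J_{\tilde G},\omega_{S/I_L})\cong \bigl((I_L:J_{\tilde G})/I_L\bigr)(-2)$, which together with $(I_L:J_{\tilde G})=(I_L:g)$ gives (a); your Hilbert series bookkeeping for (b), using $H(S/I_L,t)=(1+t)^{n-1}/(1-t)^{n+1}$, $H(S/J_{\tilde G},t)=(1+(n-1)t)/(1-t)^{n+1}$ and $H(\omega(M),t)=(-1)^{\dim M}H(M,t^{-1})$, is accurate and reproduces the stated numerator exactly (a good sanity check is $n=3$, where it collapses to $(1+2t)/(1-t)^4$). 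Two small points deserve to be made explicit rather than asserted. First, the statement that $g\notin P_T(L)$ for every nonempty (minimal) $T$ uses the standard fact that a minor $x_iy_j-x_jy_i$ lies in $P_T(G)$ only if $i\in T$, $j\in T$, or $i,j$ lie in the same component of $G_{[n]\setminus T}$; one line suffices, e.g.\ the specialization $x_1\mapsto 1$, $y_n\mapsto 1$, all other variables to $0$ kills $P_T(L)$ but not $g$. Second, your ``same kind of case analysis'' for the non-edge generators $h$ is actually trivial: since $h\in P_\emptyset(L)=J_{\tilde G}$, every minimal prime not containing $h$ has $T\neq\emptyset$, hence does not contain $g$ either, so $(I_L:g)\subseteq (I_L:h)$ with no further analysis; combined with $g\in J_{\tilde G}$ this gives $(I_L:J_{\tilde G})=(I_L:g)$. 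With those lines added, your proof stands on its own, whereas the paper's reader must consult \cite{So}; the only thing the citation buys is brevity and consistency with the source where the approximately Cohen--Macaulay structure of $S/I_C$ is developed.
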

\begin{proof} For the proof see \cite[Lemma 4.8 and Theorem 4.10]{So}.
\end{proof}

In order to compute the Betti numbers of a cycle we need to understand the modules $I_L:g/I_L$ and $S/I_L:g$. To this end we need the following lemma about the canonical module of $S/I_L:g$.
\begin{lemma} \label{cc1} With the notation before we have,
\begin{itemize}
 \item[(a)] $\omega (S/I_{L}:g)\cong J_{\tilde{G}}/I_{L}(-2).$
 \item[(b)] Minimal number of generators of $\omega (S/I_{L}:g)$ is $\binom{n-1}{2}$.
\end{itemize}
\end{lemma}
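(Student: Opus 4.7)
The plan for (a) is to use the short exact sequence
\[
0 \to (I_L:g)/I_L \to S/I_L \to S/(I_L:g) \to 0 \qquad (\ast)
\]
and dualize via Theorem~\ref{p5}; part (b) will then follow by counting generators of $J_{\tilde G}/I_L$.

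First I would show that all three terms of $(\ast)$ are Cohen-Macaulay of dimension $n+1$. That $S/I_L$ is CM of dimension $n+1$ is known for the line graph, and $(I_L:g)/I_L \cong \omega(S/J_{\tilde G})(2)$ by Theorem \ref{c1}(a), so it is CM since $S/J_{\tilde G}$ (the classical $2\times 2$ minor determinantal ring) is CM of dimension $n+1$. Cohen-Macaulayness of $S/(I_L:g)$ can be extracted from the long exact $\Ext$-sequence attached to $(\ast)$ together with the alternative SES $0 \to S/(I_L:g)(-2) \to S/I_L \to S/I_C \to 0$ obtained from multiplication by $g$ (noting $I_C = I_L + (g)$). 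Dualizing $(\ast)$ by applying $\Ext_S^{n-1}(-,S(-2n))$ then collapses the LES to
\[
0 \to \omega(S/(I_L:g)) \to \omega(S/I_L) \to \omega((I_L:g)/I_L) \to 0,
\]
and Theorem \ref{c1}(a) combined with $\omega\omega = \mathrm{id}$ on CM modules of dimension $n+1$ gives $\omega((I_L:g)/I_L) \cong S/J_{\tilde G}(-2)$; hence
\[
0 \to \omega(S/(I_L:g)) \to \omega(S/I_L) \to S/J_{\tilde G}(-2) \to 0. \qquad (\star)
\]

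The main obstacle is to identify $\omega(S/(I_L:g))$ with $J_{\tilde G}/I_L(-2)$ from $(\star)$. The natural target for comparison is the shifted SES
\[
0 \to J_{\tilde G}/I_L(-2) \to S/I_L(-2) \to S/J_{\tilde G}(-2) \to 0,
\]
and matching this against $(\star)$ requires a compatible iso $\omega(S/I_L) \cong S/I_L(-2)$ respecting the surjections onto $S/J_{\tilde G}(-2)$. This is where the specific structure of $I_L$ must enter (beyond a generic Cohen-Macaulay property); I expect a Hilbert-series computation via Theorem \ref{c1}(b) to confirm that the graded Hilbert functions of the two candidates agree, and a direct natural map (arising for instance from the antisymmetry of the defining Plücker-type quadrics on $[n]$) to realize the desired iso.

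For (b), granted (a) the minimal number of generators of $\omega(S/(I_L:g))$ equals that of $J_{\tilde G}/I_L$. Now $J_{\tilde G}$ is minimally generated by the $\binom{n}{2}$ binomials $x_i y_j - x_j y_i$ with $i<j$, of which exactly the $n-1$ with $j=i+1$ lie in $I_L$. The remaining $\binom{n}{2}-(n-1)=\binom{n-1}{2}$ binomials thus give a generating set of $J_{\tilde G}/I_L$; since their images are $K$-linearly independent in the degree-$2$ component of $J_{\tilde G}/(I_L + \mathfrak m J_{\tilde G})$, graded Nakayama shows this set is minimal.
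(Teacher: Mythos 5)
Your route is the paper's: dualize the exact sequence $0 \to (I_L:g)/I_L \to S/I_L \to S/(I_L:g) \to 0$, use $(I_L:g)/I_L \cong \omega(S/J_{\tilde G})(2)$ and double duality for Cohen--Macaulay modules, and read off the kernel; your count in (b) via the degree-two component and graded Nakayama is correct, and your way of getting Cohen--Macaulayness of $S/(I_L:g)$ (from the second sequence $0 \to S/(I_L:g)(-2)\to S/I_L \to S/I_C \to 0$ together with $\depth S/I_C \geq n$, which is the approximate Cohen--Macaulayness quoted before Theorem \ref{c1}) does work, and is in fact more careful than the paper, which only asserts it. The genuine gap is exactly at what you call the main obstacle: you never prove $\omega(S/I_L)\cong S/I_L(-2)$, you only ``expect'' it from a Hilbert-series check plus an unspecified ``natural map''. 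An equality of Hilbert series cannot produce a graded isomorphism of modules, and the appeal to the antisymmetry of the Pl\"ucker-type quadrics is not an argument. The missing (and easy) point is that $I_L$ is a complete intersection: its $n-1$ generators $x_iy_{i+1}-x_{i+1}y_i$ generate an ideal of height $n-1$, hence form a regular sequence, so $S/I_L$ is Gorenstein and
\[
\omega(S/I_L)=\Ext_S^{n-1}(S/I_L,S(-2n))\cong S/I_L\bigl(2(n-1)-2n\bigr)=S/I_L(-2).
\]
This is also what the paper uses implicitly when it writes $S/I_L(-2)$ as the middle term of the dualized sequence.

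Once that identification is supplied, your remaining worry about an isomorphism ``respecting the surjections'' dissolves: the dualized sequence exhibits $\omega(S/(I_L:g))$ as the kernel of a degree-zero graded surjection $S/I_L(-2)\to S/J_{\tilde G}(-2)$, and any such surjection sends the cyclic generator to a nonzero scalar, hence is the canonical projection up to a unit (using $I_L\subset J_{\tilde G}$ and $(S/J_{\tilde G})_0=K$); its kernel is therefore $J_{\tilde G}/I_L(-2)$ with no further compatibility check. With the complete-intersection observation inserted, your argument closes and coincides with the paper's proof of Lemma \ref{cc1}; as written, however, the central isomorphism is left as a hope rather than a proof.
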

\begin{proof} From Theorem \ref{c1} we have $I_L:g/I_L\cong \omega (S/J_{\tilde{G}})(2)$. Now consider the exact sequence
\[0\to \omega (S/J_{\tilde{G}})(2)\to S/I_L\to S/I_L:g\to 0.\]
All modules in above exact sequence are Cohen-Macaulay of dimension $n+1$. By applying local cohomology and dualizing it we get the following exact sequence
\[0\to \omega (S/I_L:g )\to S/I_L(-2)\to S/J_{\tilde{G}}(-2)\to 0.\] Which implies the isomorphism in (a) and then (a) gives us (b).
\end{proof}
All $\Tor$ modules of $I_{L}:g/I_L$ are given in the following lemma.
\begin{lemma} We have the following isomorphisms.
\begin{itemize}
 \item[(a)] $\Tor_i^S(K,I_{L}:g/I_L) \cong K^{c_i}(-n+2-i)$ for $i = 0,\ldots,n-2,$ where $c_i = (n-1-i) \binom{n}{i}$.
 \item[(b)] $\Tor_{n-1}^S(K,I_{L}:g/I_L) \cong K(-2n+2).$

\end{itemize}
\end{lemma}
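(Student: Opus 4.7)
The strategy is to reduce the computation to the well-understood Eagon--Northcott resolution by exploiting Theorem \ref{c1}(a), which gives the isomorphism $I_L:g/I_L \cong \omega(S/J_{\tilde{G}})(2)$. Thus the graded Tor modules of $I_L:g/I_L$ are just shifts of those of the canonical module $\omega(S/J_{\tilde{G}})$.

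First I would observe that $J_{\tilde{G}}$ is the ideal of $2\times 2$ minors of the generic $2\times n$ matrix with rows $(x_1,\dots,x_n)$ and $(y_1,\dots,y_n)$. Consequently $S/J_{\tilde{G}}$ is a classical determinantal ring: it is Cohen--Macaulay of dimension $n+1$ and projective dimension $n-1$, and its minimal graded free resolution is the Eagon--Northcott complex
\[
0 \to S(-n)^{n-1} \to S(-(n-1))^{(n-2)n} \to \cdots \to S(-2)^{\binom{n}{2}} \to S \to 0,
\]
with graded Betti numbers $\beta_{i,i+1}(S/J_{\tilde{G}}) = i\binom{n}{i+1}$ for $1\le i \le n-1$.

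Second, I would apply Theorem \ref{p5}: applying $\Hom_S(-,S(-2n))$ to this resolution and reversing the arrows produces the minimal free resolution of $\omega(S/J_{\tilde{G}})$. A direct bookkeeping, using $\binom{n}{n-i}=\binom{n}{i}$ and relabelling $j := n-1-i$, shows that the $i$-th free module of this dual resolution is $S(-(n+i))^{(n-1-i)\binom{n}{i}}$ for $0\le i \le n-2$, together with $S(-2n)$ in homological degree $n-1$. Thus
\[
\Tor_i^S(K,\omega(S/J_{\tilde{G}}))\cong K^{c_i}(-(n+i)) \quad (0\le i\le n-2), \qquad \Tor_{n-1}^S(K,\omega(S/J_{\tilde{G}}))\cong K(-2n),
\]
where $c_i=(n-1-i)\binom{n}{i}$. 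Shifting by $2$ to account for the twist in Theorem \ref{c1}(a) converts $-(n+i)$ into $-n-i+2 = -n+2-i$ and $-2n$ into $-2n+2$, yielding precisely (a) and (b).

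The only step requiring care is tracking the degree shifts through the dualisation, since a sign error in the twist would be invisible until the final comparison. The substantive input, the Eagon--Northcott resolution of the ideal of maximal minors of a $2\times n$ generic matrix, is classical (see \cite{BH}), so there is no serious obstacle; the proof is essentially bookkeeping on top of Theorems \ref{c1} and \ref{p5}.
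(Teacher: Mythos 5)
Your proposal is correct and follows essentially the same route as the paper: the paper likewise invokes the well-known (Eagon--Northcott) minimal free resolution of $S/J_{\tilde{G}}$ with $b_i = i\binom{n}{i+1}$, dualizes it via Theorem \ref{p5} to resolve $\omega(S/J_{\tilde{G}})$, and applies the twist from Theorem \ref{c1}(a). Your degree bookkeeping (twists $-(n+i)$ becoming $-n+2-i$, and $-2n$ becoming $-2n+2$) matches the paper's conclusion exactly.
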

\begin{proof} It is well known that $S/J_{\tilde{G}}$ is Cohen-Macaulay with the minimal free
resolution
\[0 \to S^{b_{n-1}}(-n) \to \cdots \to S^{b_{n-1-i}}(-n+i) \to \cdots \to S^{b_1}(-2) \to S\] where $b_i=i \binom{n}{i+1}$. By Lemma \ref{c1} (a) and Theorem \ref{p5}, we have the above statement.
\end{proof}
Now we will give the Theorem in which we compute all $\Tor$ modules of $S/I_L:g.$
\\
\\
\begin{theorem} With the previous notation we have
\begin{itemize}
 \item[(a)] $\Tor_i^S(K,S/I_L:g) \cong K^{\binom{n-1}{i}}(-2i)\oplus K^{c_{i-1}}(-n+3-i)$ for $i = 1,\ldots,n-3,$
 \item[(b)] $\Tor_{n-2}^S(K,S/I_L:g) \cong K^{c_{n-3}}(-2n+5),$
 \item[(c)] $\Tor_{n-1}^S(K,S/I_L:g) \cong K^{\binom{n-1}{2}}(-2n+4),$
 \item[(d)] $\reg(S/I_L:g)=n-3.$
\end{itemize}
\end{theorem}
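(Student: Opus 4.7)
The plan is to apply $\Tor_*^S(K,-)$ to the short exact sequence
\[0 \to I_L:g/I_L \to S/I_L \to S/(I_L:g) \to 0\]
already used in the proof of Lemma~\ref{cc1}, and to extract the Tor modules of $S/(I_L:g)$ from the resulting long exact sequence. The Tor of $I_L:g/I_L$ is supplied by the previous lemma. For the middle term, observe that the $n-1$ generators of $I_L$ form a regular sequence: the codimension of $I_L$ in $S$ equals $2n-(n+1)=n-1$, matching the number of generators, so $S/I_L$ is a complete intersection. Hence the Koszul complex is the minimal free resolution and $\Tor_i^S(K, S/I_L) \cong K^{\binom{n-1}{i}}(-2i)$ for $0 \leq i \leq n-1$.

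The decisive observation is a degree count. The previous lemma places $\Tor_i^S(K, I_L:g/I_L)$ in internal degree $n-2+i$ for $0 \leq i \leq n-2$, and in degree $2n-2$ at $i=n-1$; the Koszul contribution sits in degree $2i$. These degrees coincide only at the top two positions $i=n-2$ and $i=n-1$. For every other $i$ the arrow $\Tor_i^S(K, I_L:g/I_L) \to \Tor_i^S(K, S/I_L)$ vanishes by degree mismatch, so the long exact sequence collapses into short exact sequences of graded $K$-vector spaces. These split, giving
\[\Tor_i^S(K, S/(I_L:g)) \cong \Tor_i^S(K, S/I_L) \oplus \Tor_{i-1}^S(K, I_L:g/I_L)\]
for $1 \leq i \leq n-3$, which is exactly (a).

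The two collision positions $i=n-2,\,n-1$ are handled by dualizing via Theorem~\ref{p5}. All three modules in the short exact sequence are Cohen-Macaulay of dimension $n+1$, and the dualized sequence is
\[0 \to \omega(S/(I_L:g)) \to S/I_L(-2) \to S/J_{\tilde{G}}(-2) \to 0,\]
already derived in the proof of Lemma~\ref{cc1}, with the right-hand arrow being the natural surjection. By Theorem~\ref{p5} the $i$-th $\Tor$-map of the original sequence corresponds, after transposing $K$-vector spaces, to the $(n-1-i)$-th $\Tor$-map of the dualized sequence. The latter are easy to read off: at homological degree $0$ it is the identity $K\to K$ on generators in degree $2$, and at homological degree $1$ it is the inclusion $K^{n-1}\hookrightarrow K^{\binom{n}{2}}$ of the generators of $I_L$ into those of $J_{\tilde{G}}$. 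Transposing makes the original map at $i=n-1$ an isomorphism and the one at $i=n-2$ surjective with kernel of dimension $\binom{n}{2}-(n-1)=\binom{n-1}{2}$. Substituting these ranks into the remaining portion of the long exact sequence yields (b) and (c). Part (d) is then immediate: every nonzero Betti contribution lies in the diagonal $j\leq n-3$ and this bound is attained in (a), (b) and (c). The main obstacle is the duality translation at the two top homological positions; everything else is formal.
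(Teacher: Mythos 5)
Your proposal is correct and follows the paper's skeleton: the same short exact sequence $0 \to I_L:g/I_L \to S/I_L \to S/(I_L:g) \to 0$, the Koszul description of $\Tor_i^S(K,S/I_L)$ for the complete intersection $S/I_L$, and the same degree-zero argument showing the comparison maps vanish for $i\le n-3$, which gives (a) exactly as in the paper; part (d) then follows in both treatments by inspecting the shifts. You diverge only at the two collision positions $i=n-2,\,n-1$. The paper handles them by noting that $S/(I_L:g)$ is Cohen--Macaulay of projective dimension $n-1$ (so $\Tor_n$ vanishes and the map $K(-2n+2)\to K(-2n+2)$ is forced to be an isomorphism), then reads off $\Tor_{n-1}^S(K,S/(I_L:g))\cong K^{\binom{n-1}{2}}(-2n+4)$ from Lemma \ref{cc1}(b) together with Theorem \ref{p5} (the last Betti number of a Cohen--Macaulay module counts the minimal generators of its canonical module), and finally obtains (b) from a dimension count using $c_{n-2}=\binom{n-1}{2}+\binom{n-1}{1}$. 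You instead compute the ranks of the two relevant comparison maps directly, by transposing them under duality: the map at homological degree $n-1$ (resp. $n-2$) is the $K$-dual of the degree-$0$ (resp. degree-$1$) Tor map of the dualized sequence $0\to\omega(S/(I_L:g))\to S/I_L(-2)\to S/J_{\tilde{G}}(-2)\to 0$, namely of $K\to K$ and of the injection $K^{n-1}\to K^{\binom{n}{2}}$ on minimal generators; this yields the same ranks since $\binom{n}{2}-(n-1)=\binom{n-1}{2}$. Two small points deserve an explicit line: Theorem \ref{p5} as stated concerns a single module, so the functoriality you invoke (a chain-map lift of a morphism of Cohen--Macaulay modules of equal codimension dualizes to a lift of the induced map of canonical modules, whence the Tor maps are literal transposes because the resolutions are minimal) should be spelled out, and the identification of the right-hand arrow of the dualized sequence with the natural surjection needs the one-line remark that any degree-zero surjection between these cyclic modules sends the generator to a nonzero scalar multiple of the generator. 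Neither is a gap; your variant buys an explicit description of the boundary maps, where the paper only needs their ranks via the generator count of $\omega(S/(I_L:g))$.
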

\begin{proof} Consider the exact sequence \[0\to I_L:g/I_L \to S/I_L\to S/I_L:g\to 0.\]
Let $i<n-2$, then the above exact sequence induces a graded homomorphism of degree zero
\[
\Tor_i^S(K,I_L:g/I_L)\cong K^{c_i}(-n+2-i) \to \Tor_i^S(K,S/I_L) \cong K^{\binom{n-1}{i}}(-2i).
\]
Therefore it is the zero homomorphism so we have the following isomorphism
\[
\Tor_i^S(K,S/I_L:g) \cong \Tor_i^S(K,S/I_L) \oplus \Tor_{i-1}^S(K,I_L:g/I_L).
\]
Let $i=n-1$, then we get the injection $0 \to K(-2n+2) \to K(-2n+2)$ which is in fact
an isomorphism. So we have the following exact sequence of K-vector spaces.
\begin{gather*}
0 \to \Tor_{n-1}^S(K,S/I_L:g) \to \Tor_{n-2}^S(K,I_L:g/I_L)\cong K^{c_{n-2}}(-2n+4) \to \\ \Tor_{n-2}^S(K,S/I_L) \cong K^{\binom{n-1}{1}}(-2n+4)\to
\Tor_{n-2}^S(K,S/I_L:g) \to \Tor_{n-3}^S(K,I_L:g/I_L)\to 0.
\end{gather*}
By Lemma \ref{cc1} (b) and Theorem \ref{p5} we have $\Tor_{n-1}^S(K,S/I_L:g) \cong K^{\binom{n-1}{2}}(-2n+4)$ since $S/I_L:g$ is Cohen-Macaulay. By investigating the K-vector space dimension of these modules and $c_{n-2}=\binom{n}{2}=\binom{n-1}{2}+\binom{n-1}{1}$, it follows that $\Tor_{n-2}^S(K,S/I_L:g) \cong \Tor_{n-3}^S(K,I_L:g/I_L).$
\end{proof}
\begin{lemma}\label{cc3} The coefficient of the highest power $t^{n-1}$ of the numerator of
the Hilbert series $H(S/I_C,t)$ is $\binom{n-1}{2}-1$.
\end{lemma}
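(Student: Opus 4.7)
The plan is a direct binomial expansion of the numerator
\[
N(t) = (1+t)^{n-1} - t^{2}(1+t)^{n-1} + (n-1)t^{n} + t^{n+1}
\]
from Theorem \ref{c1}(b), followed by coefficient extraction at $t^{n-1}$. The argument has two natural steps.

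Step one is to verify that $t^{n-1}$ is really the top-degree nonzero term of $N(t)$---this is what the phrase ``highest power $t^{n-1}$'' in the statement asserts. I would first group the first two summands as $(1+t)^{n-1}(1-t^{2}) = (1+t)^{n}(1-t)$, rewriting $N(t) = (1+t)^{n}(1-t) + (n-1)t^{n} + t^{n+1}$. By the binomial theorem, $[t^{n+1}]\,(1+t)^{n}(1-t) = -\binom{n}{n} = -1$ and $[t^{n}]\,(1+t)^{n}(1-t) = \binom{n}{n} - \binom{n}{n-1} = 1-n$; both are cancelled exactly by the explicit additive corrections $+t^{n+1}$ and $+(n-1)t^{n}$. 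Hence $\deg N(t) \leq n-1$ and the phrase ``highest power $t^{n-1}$'' is legitimate.

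Step two is to extract $[t^{n-1}]N(t)$. Since neither $(n-1)t^{n}$ nor $t^{n+1}$ contributes at this degree, only the pair $(1+t)^{n-1} - t^{2}(1+t)^{n-1}$ matters. The binomial theorem reduces the computation to a one-line expression in $\binom{n-1}{n-1}$ and $\binom{n-1}{n-3}$; applying the symmetry $\binom{n-1}{n-3} = \binom{n-1}{2}$ delivers the claimed value $\binom{n-1}{2} - 1$.

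The main obstacle is step one: verifying the top-degree cancellations in $N(t)$ is exactly what makes the lemma well-posed, since otherwise ``highest power'' would wrongly refer to $t^{n+1}$. Step two is then an essentially mechanical binomial identity.
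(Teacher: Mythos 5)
Your approach coincides with the paper's: expand the numerator $N(t)=(1+t)^{n-1}-t^{2}(1+t)^{n-1}+(n-1)t^{n}+t^{n+1}$ of Theorem \ref{c1}(b) and extract coefficients. Your Step one is in fact more careful than the paper's one-line proof: the factorization $N(t)=(1+t)^{n}(1-t)+(n-1)t^{n}+t^{n+1}$ together with the explicit verification that the coefficients of $t^{n}$ and $t^{n+1}$ vanish is exactly the justification the paper omits when it says that ``the last two terms cancel.''

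The problem is in Step two: the one-line expression you allude to is $[t^{n-1}]N(t)=\binom{n-1}{n-1}-\binom{n-1}{n-3}=1-\binom{n-1}{2}$, which is the \emph{negative} of the value you claim it delivers. Check $n=4$: the numerator is $1+3t+2t^{2}-2t^{3}$, so the coefficient of $t^{3}$ is $-2$, not $\binom{3}{2}-1=2$. This sign slip is inherited from the statement and from the paper's own proof of Lemma \ref{cc3}, which also asserts that the expansion ends in $+\left(\binom{n-1}{2}-1\right)t^{n-1}$. What Theorem \ref{cc2} actually needs is $\beta_{n,n-2}=\binom{n-1}{2}-1$, and since rewriting $H(S/I_{C},t)$ over $(1-t)^{2n}$ multiplies $N(t)$ by $(1-t)^{n-1}$, the relation is $(-1)^{n}\beta_{n,n-2}=(-1)^{n-1}[t^{n-1}]N(t)$, i.e.\ $\beta_{n,n-2}=-[t^{n-1}]N(t)$; a negative top coefficient is therefore exactly what a positive top Betti number requires. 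You should either state the coefficient as $1-\binom{n-1}{2}$ or say explicitly that you are computing it up to sign; as written, the assertion that the binomial identity ``delivers the claimed value $\binom{n-1}{2}-1$'' is not correct.
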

\begin{proof} If we expand $(1+t)^{n-1}$ in the numerator of $H(S/I_C,t)$ of Theorem \ref{c1} (b). The last two terms in the numerator cancels and we get $1+(n-1)t+ \cdots +(\binom{n-1}{2}-1)t^{n-1}$.
\end{proof}
Now we are ready to say about all $\Tor$ modules of $S/I_C$.
\begin{theorem}\label{cc2} With the previous notation we have
\begin{itemize}
 \item[(a)] $\Tor_i^S(K,S/I_C) \cong K^{\binom{n}{i}}(-2i)\oplus K^{c_{i-2}}(-n+2-i)$ for $i = 1,\ldots,n-2,$
 \item[(b)] $\Tor_{n-1}^S(K,S/I_C) \cong K^{c_{n-3}}(-2n+3),$
 \item[(c)] $\Tor_{n}^S(K,S/I_C) \cong K^{\binom{n-1}{2}-1}(-2n+2).$

\end{itemize}
\end{theorem}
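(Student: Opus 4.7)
My plan is to use the short exact sequence
\[
0 \to (S/(I_L:g))(-2) \xrightarrow{\cdot g} S/I_L \to S/I_C \to 0
\]
arising from $I_C = I_L + (g)$, and to exploit the associated long exact sequence in $\Tor_*^S(K,-)$, together with the known $\Tor$ modules of $S/I_L$ (Koszul complex on the $n-1$ quadrics generating the line) and of $S/(I_L:g)$ (the previous theorem). A degree analysis shows that $\Tor_i^S(K, S/I_L) \cong K^{\binom{n-1}{i}}(-2i)$ lives only in internal degree $2i$, while after the $(-2)$-shift, $\Tor_i^S(K, (S/(I_L:g))(-2))$ has graded pieces in degrees $2i+2$ and $n+i-1$; these can coincide only when $i = n-1$, so the connecting map $\phi_i$ vanishes for every $i \le n-2$.

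Consequently, for $1 \le i \le n-2$ the long exact sequence breaks into split short exact sequences of graded $K$-vector spaces $0 \to \Tor_i^S(K, S/I_L) \to \Tor_i^S(K, S/I_C) \to \Tor_{i-1}^S(K, (S/(I_L:g))(-2)) \to 0$, and Pascal's identity $\binom{n-1}{i} + \binom{n-1}{i-1} = \binom{n}{i}$ yields (a). At the top, using the vanishing of $\Tor_n$ on the two Cohen--Macaulay modules, only the five-term exact sequence
\[
0 \to \Tor_n^S(K, S/I_C) \to K^{\binom{n-1}{2}}(-2n+2) \xrightarrow{\phi_{n-1}} K(-2n+2) \to \Tor_{n-1}^S(K, S/I_C) \to K^{c_{n-3}}(-2n+3) \to 0
\]
survives, so (b) and (c) both come down to showing $\phi_{n-1}$ has rank one.

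To force surjectivity of $\phi_{n-1}$, I would dualize the short exact sequence via $\Ext^\bullet_S(-, S(-2n))$. Since $S/I_L$ and $S/(I_L:g)$ are Cohen--Macaulay of projective dimension $n-1$, only $\omega = \Ext^{n-1}$ contributes for them; using $\omega(S/I_L) = S/I_L(-2)$ (complete intersection of $n-1$ quadrics) and $\omega((S/(I_L:g))(-2)) \cong J_{\tilde{G}}/I_L$ (from Lemma \ref{cc1}(a)), the $\Ext$-sequence collapses to
\[
0 \to \omega(S/I_C) \to S/I_L(-2) \xrightarrow{\cdot g} J_{\tilde{G}}/I_L \to \omega^n(S/I_C) \to 0,
\]
whose middle map has image $g\cdot(S/I_L) = I_C/I_L$, identifying $\omega^n(S/I_C) \cong J_{\tilde{G}}/I_C$. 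The $\binom{n}{2} - n = \binom{n-1}{2} - 1$ non-edge binomials $x_iy_j - x_jy_i$ of $C$ span $J_{\tilde{G}}/I_C$ over $S$ and are linearly independent modulo $I_C + \mathfrak{m}J_{\tilde{G}}$ (no $K$-linear combination of the edge binomials of $C$ can involve a non-edge monomial), so $\mu(\omega^n(S/I_C)) = \binom{n-1}{2} - 1$. Dualizing the minimal free resolution then gives $\dim_K \Tor_n^S(K, S/I_C) = \mu(\omega^n(S/I_C)) = \binom{n-1}{2} - 1$, forcing $\phi_{n-1}$ surjective and delivering (b) and (c).

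The hard part will be the identification $\omega^n(S/I_C) \cong J_{\tilde{G}}/I_C$ in the dualized sequence. A Hilbert-series attack using Lemma \ref{cc3} alone yields only the combined relation $\beta_{n,2n-2}(S/I_C) - \beta_{n-1,2n-2}(S/I_C) = \binom{n-1}{2} - 1$, which is invariant under the choice of the rank of $\phi_{n-1}$ in $\{0,1\}$; the canonical-module input above appears indispensable for disentangling the two unknown Betti numbers.
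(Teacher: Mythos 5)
Your proposal coincides with the paper's proof up to and including the five-term exact sequence: the paper uses the same short exact sequence $0\to (S/(I_L:g))(-2)\to S/I_L\to S/I_C\to 0$, the same internal-degree argument to show the connecting maps vanish for $i\le n-2$, the same splitting plus Pascal's identity for (a), and the same five-term sequence at the top. Where you genuinely diverge is the decisive last step, the rank of $\phi_{n-1}\colon K^{\binom{n-1}{2}}(-2n+2)\to K(-2n+2)$. The paper settles it by declaring it ``clear from Lemma \ref{cc3}''; your observation that the Hilbert series only gives $\beta_{n,2n-2}-\beta_{n-1,2n-2}=\binom{n-1}{2}-1$ is correct, and since the five-term sequence forces $\beta_{n,2n-2}=\binom{n-1}{2}-\operatorname{rank}\phi_{n-1}$ and $\beta_{n-1,2n-2}=1-\operatorname{rank}\phi_{n-1}$, that relation holds for either value of the rank. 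So the paper's concluding appeal to the Hilbert series is not by itself conclusive, and your duality argument supplies exactly the missing input; it is in fact the natural extension to the non-Cohen--Macaulay module $S/I_C$ of the device the paper itself uses (Theorem \ref{p5} together with Lemma \ref{cc1}(b)) to pin down the top Betti number of the Cohen--Macaulay module $S/(I_L:g)$. Your auxiliary steps are sound: $\beta_n(S/I_C)=\mu(\omega^n(S/I_C))$ because dualizing a minimal free resolution of length $n$ gives a minimal presentation of $\Ext^n_S(S/I_C,S(-2n))$, and the $\binom{n}{2}-n=\binom{n-1}{2}-1$ non-edge binomials do minimally generate $J_{\tilde{G}}/I_C$.

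The one step you assert rather than prove is the identification of the dualized middle map with multiplication by $g$, hence $\omega^n(S/I_C)\cong J_{\tilde{G}}/I_C$; you rightly flag it as the hard part, and as stated it is a gap, because Lemma \ref{cc1}(a) provides only an abstract isomorphism, not a description of the dual of the map $\cdot g$ under it. It can be closed as follows. Work over $R=S/I_L$, a complete intersection of $n-1$ quadrics, so $\omega_R\cong R(-2)$ and $\Ext^{n-1}_S(-,S(-2n))\cong \Hom_R(-,\omega_R)$ on maximal Cohen--Macaulay $R$-modules. Since $I_C/I_L\cong (R/(0:_Rg))(-2)$, one gets $\omega(I_C/I_L)\cong \Hom_R(R/(0:_Rg),\omega_R)(2)\cong (I_L:(I_L:g))/I_L$, and the dual of the inclusion $I_C/I_L\subset R$ becomes multiplication by $g$ from $R(-2)$ into this module, with image $I_C/I_L$. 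Finally $I_L:(I_L:g)=J_{\tilde{G}}$: the inclusion of $J_{\tilde{G}}$ holds because Theorem \ref{c1}(a) exhibits $(I_L:g)/I_L$ as a module over $S/J_{\tilde{G}}$, i.e.\ $J_{\tilde{G}}\cdot(I_L:g)\subseteq I_L$, and equality follows since both graded submodules of $R$ have the Hilbert function of $\omega(S/(I_L:g))(2)$ by Lemma \ref{cc1}(a). With this supplement your route is complete and, at the critical point, more robust than the argument printed in the paper.
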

\begin{proof} Consider the exact sequence
\[0\rightarrow S/I_{L}:g(-2) \rightarrow S/I_{L}\rightarrow S/I_{C}\rightarrow 0.\]
Let $i<n-1$, then the above exact sequence induces a graded homomorphism of degree zero
\[
\Tor_i^S(K,S/I_L:g)(-2)\cong K^{\binom{n-1}{i}}(-2i-2)\oplus K^{c_{i-1}}(-n+1-i) \to \Tor_i^S(K,S/I_L) \cong K^{\binom{n-1}{i}}(-2i).
\]
Therefore it is the zero homomorphism so we have the following isomorphism
\[
\Tor_i^S(K,S/I_C) \cong \Tor_i^S(K,S/I_L) \oplus \Tor_{i-1}^S(K,S/I_L:g)(-2).
\]
Let $i=n$, then we have the following exact sequence of K-vector spaces
\begin{gather*}
0 \to \Tor_{n}^S(K,S/I_C) \to \Tor_{n-1}^S(K,S/I_L:g)(-2)\cong K^{\binom{n-1}{2}}(-2n+2) \to \Tor_{n-1}^S(K,S/I_L)\cong \\K(-2n+2) \to  \Tor_{n-1}^S(K,S/I_C) \to \Tor_{n-2}^S(K,S/I_L:g)(-2) \cong K^{c_{n-3}}(-2n+3)\to 0.
\end{gather*}
It is clear from Lemma \ref{cc3}  that $\Tor_{n}^S(K,S/I_C)\cong K^{\binom{n-1}{2}-1}(-2n+2)$ which further implies that $\Tor_{n-1}^S(K,S/I_C) \cong K^{c_{n-3}}(-2n+3).$

\end{proof}
 As a final result on the binomial edge ideal of a cycle we describe the explicit values of Betti numbers in the next corollary.
\begin{corollary}\label{cc4} Let $S/I_C$ be binomial edge ideal of cycle on vertex set $[n]$. Then
\begin{enumerate}
  \item [(a)]$\reg(S/I_C)=n-2,$
  \item [(b)]We have the following non zero Betti numbers for $S/I_C$ on the diagonal of the Betti diagram
$$\beta_{i,j}=\binom{n}{i}\text{, if  } i=j=0,\ldots,n-3$$
and the last row of Betti diagram
\begin{eqnarray*}
\beta_{i,n-2}=\left\{\begin{array}{ll}
c_{i-2}, & \hbox{if \, $i = 2,\ldots,n-3$ ;}\\
\binom{n}{2}+ c_{n-4}, & \hbox{if \, $i=n-2$ ;}\\
 c_{n-3}, & \hbox{if \, $i=n-1$ ;}\\
 \binom{n-1}{2}-1, & \hbox{if \, $i=n.$}
\end{array}\right.
\end{eqnarray*}
\end{enumerate}

\end{corollary}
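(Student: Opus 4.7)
The plan is to read the Betti numbers of $S/I_C$ directly off Theorem \ref{cc2} via the standard identification $\beta_{i,j}(S/I_C) = \dim_K \Tor_i^S(K,S/I_C)_{i+j}$. Since Theorem \ref{cc2} writes each $\Tor$ module as a direct sum of shifted copies of the residue field $K$, it suffices to translate every graded summand $K^a(-d)$ into a contribution of $a$ to the Betti number in position $(i,\,d-i)$ of the diagram.

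First I will apply this recipe to the generic range $i = 1, \ldots, n-3$. The summand $K^{\binom{n}{i}}(-2i)$ is concentrated in internal degree $2i$, so it places $\binom{n}{i}$ on the diagonal at position $(i,i)$. The summand $K^{c_{i-2}}(-n+2-i)$ is concentrated in internal degree $n-2+i$, so it places $c_{i-2}$ in the last row at position $(i,\,n-2)$. For $i \le 1$ the coefficient $c_{i-2}$ vanishes under the natural convention $\binom{n}{-1}=\binom{n}{-2}=0$, so those small indices contribute only to the diagonal.

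Next I will handle the boundary cases where the two rows collide or the $\Tor$ collapses. At $i = n-2$ the two shifts $-2(n-2)$ and $-n+2-(n-2)$ both equal $-(2n-4)$, so both summands land at position $(n-2,\,n-2)$ and their ranks add, giving $\beta_{n-2,\,n-2} = \binom{n}{2} + c_{n-4}$. For $i = n-1$ and $i = n$, Theorem \ref{cc2} supplies a single graded piece each, in internal degrees $2n-3$ and $2n-2$ respectively; both place their contributions in row $j = n-2$, yielding $\beta_{n-1,\,n-2} = c_{n-3}$ and $\beta_{n,\,n-2} = \binom{n-1}{2}-1$.

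Once the diagram is filled in, part (a) is immediate: every non-vanishing entry sits either on the diagonal $j = i \le n-3$ or in row $j = n-2$, so $\reg(S/I_C) = n-2$. There is essentially no obstacle beyond careful bookkeeping; the only point requiring attention is the coincidence of the diagonal and the last row at $i = n-2$, where the two summands of $\Tor_{n-2}^S(K,S/I_C)$ share a single Betti position and their dimensions must be summed.
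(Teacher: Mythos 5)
Your proposal is correct and follows essentially the same route as the paper, whose proof of this corollary is simply to read the Betti numbers off the graded $\Tor$ modules computed in Theorem \ref{cc2} via $\beta_{i,j}=\dim_K\Tor_i^S(K,S/I_C)_{i+j}$. Your bookkeeping, including the collision of the diagonal and the last row at $i=n-2$ where $\binom{n}{2}$ and $c_{n-4}$ add, matches the paper's intended argument.
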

\begin{proof} It follows from Theorem \ref{cc2}.
\end{proof}
\begin{corollary} \label{xx} Let G be any arbitrary graph on vertex set $[n]$. Let $C$ denote a cycle on maximal
$k$ vertices as an induced subgraph. Then $\reg(S/J_G)\geq k-2$ and $\beta_{i,j}(S/J_G) \geq \beta_{i,j}(S/I_C)$, where the values of $\beta_{i,j}(S/I_C)$ are those of Corollary \ref{cc4}
for $n = k$..
\end{corollary}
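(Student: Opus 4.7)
The plan is to derive this corollary as an immediate application of Theorem~\ref{p4} (the Matsuda--Murai monotonicity result) combined with the explicit Betti number computation in Corollary~\ref{cc4}. Since $C$ appears as an induced subgraph of $G$ on $k$ vertices, Theorem~\ref{p4} yields
\[
\beta_{i,j}(S/J_G) \geq \beta_{i,j}(S/J_C) = \beta_{i,j}(S/I_C)
\]
for every $i,j$, and the right-hand side is described by Corollary~\ref{cc4} with $n$ replaced by $k$. This already gives the second assertion.

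For the regularity bound, I would use the definition $\reg M = \max\{j : \beta_{i,j}(M) \neq 0\}$ together with the fact that Corollary~\ref{cc4} exhibits nonzero Betti numbers of $S/I_C$ in the row $j = k-2$ (for instance $\beta_{k,k-2}(S/I_C) = \binom{k-1}{2} - 1 > 0$ for $k \geq 4$, and one checks the small cases $k=3$ separately where the statement $\reg(S/J_G) \geq 1$ is trivial since $J_G$ is generated in degree $2$). By the monotonicity above, the corresponding Betti number of $S/J_G$ is also positive, so $\reg(S/J_G) \geq k-2$.

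There is essentially no obstacle here: the content of the corollary is entirely packaged in Theorem~\ref{p4} (whose proof is outsourced to \cite{MM}) and in Corollary~\ref{cc4}. The only small care needed is to verify that at least one Betti number of $S/I_C$ in the top row of the Betti diagram is genuinely nonzero, so as to certify $\reg(S/I_C) = k-2$ and thus transfer the regularity lower bound; this is exactly the content of parts~(b)--(d) of Corollary~\ref{cc4}, which one simply invokes.
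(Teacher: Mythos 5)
Your proposal is correct and is exactly the paper's argument: the corollary is deduced by combining Theorem \ref{p4} with the explicit Betti numbers of the cycle from Corollary \ref{cc4}, the only extra content being the (correct) observation that some Betti number in row $k-2$ is nonzero so the regularity bound transfers. The paper's proof is just the one-line citation of these two results, so your write-up is the same route with the small cases spelled out.
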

\begin{proof} It follows from Theorem \ref{p4} and Corollary \ref{cc4}.
\end{proof}

\begin{remark} In case $G$ has a cycle $C$ on maximal $k$ vertices as an induced subgraph it has also
a line $L$ on $k-1$ vertices. That is, the lower bound  $k-2 \leq \reg (S/J_G)$ is not better than those
of \cite{MM}. The advantage of Corollary \ref{xx} is that it provides the non-vanishing of certain
Betti numbers different from those of $\beta_{i,j}(S/I_L)$.

\end{remark}

\section{Betti Numbers of the binomial edge ideal of $\mathcal{T}_3$ and $\mathcal{G}_3$ }
The clique complex was used by many authors (see e.g. \cite{EHH},\cite{AR} and \cite{GR}) to prove several results in the case of binomial edge ideals. Here in the following we introduce this nice concept.
\begin{definitions}Let $G$ be a simple graph on vertex set $[n]$.

\begin{enumerate}
                     \item A \textbf{clique} of $G$ is a subset $W$ of $[n]$
such that each vertex in $W$ is connected to any other vertex in $W$ by an edge
of $G$. In other words it is a complete subgraph of $G$.
\item A \textbf{maximal clique} is a clique that is not a subset of a larger clique.
                     \item The \textbf{clique complex} $\Delta(G)$ of $G$ is a simplicial complex whose facets are the maximal cliques of $G$.
                     \item A vertex $j\in[n]$ is called \textbf{free vertex} if it belongs to only one facet of $\Delta(G)$.
                   \end{enumerate}
\end{definitions}

\begin{examples}There are some examples of free vertices.
\begin{enumerate}
                  \item In complete graph all the vertices are free vertices.
                  \item In any graph the vertices of degree $1$ are free vertices.
                  \item A cycle graph of vertices more then $4$ has no free vertex.
                \end{enumerate}
\end{examples}
 The following Proposition in \cite{AR} is important for us.
\begin{proposition}\label{acc} Let $G$ be a simple graph on vertex set $[n]$. Let $\Delta(G)$ is clique complex of $G$ and $j\in[n]$ be a vertex of $G$. Then the following conditions are equivalent:
\begin{enumerate}
                  \item $j$ is a free vertex of $\Delta(G)$.
                  \item $j \notin T$ for all $T\subseteq[n]$ such that $c(T \setminus \{i\})<c(T)$  for each $i \in T$.

                \end{enumerate}
\end{proposition}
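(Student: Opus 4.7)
The plan is to prove the equivalence by reformulating condition (2) combinatorially. For a set $T\subseteq[n]$ and $i\in T$, reintroducing $i$ to the vertex set of $G_{[n]\setminus T}$ merges the connected components of $G_{[n]\setminus T}$ that contain neighbors of $i$; hence $c(T\setminus\{i\})<c(T)$ holds if and only if $i$ has neighbors in at least two distinct components of $G_{[n]\setminus T}$. Call a set satisfying this for every $i\in T$ a \emph{strong separator}. So (2) says exactly that $j$ belongs to no strong separator.

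For the direction (1) $\Rightarrow$ (2), I would argue directly. Assume $j$ is a free vertex, contained in the unique maximal clique $F$ of $\Delta(G)$. Every neighbor of $j$ lies in $F$ (otherwise the clique $\{j\}\cup\{\text{that neighbor}\}$ would extend to a second maximal clique containing $j$), so $N(j)$ is itself a clique. Suppose for contradiction that $j\in T$ for some strong separator $T$. Then $j$ must have neighbors in at least two distinct components of $G_{[n]\setminus T}$, but $N(j)\cap([n]\setminus T)\subseteq F\setminus\{j\}$ induces a complete, hence connected, subgraph of $G_{[n]\setminus T}$, forcing all these neighbors into one component. Contradiction.

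For (2) $\Rightarrow$ (1), I would prove the contrapositive by constructing an explicit strong separator containing $j$ whenever $j$ is not free. If $j$ lies in two distinct maximal cliques $F_1,F_2$, then $F_1\cup F_2$ is not a clique, so one can pick $u\in F_1\setminus F_2$ and $v\in F_2\setminus F_1$ with $u\not\sim v$; both are neighbors of $j$. Take $T'$ to be a minimal vertex separator of $u$ and $v$ in $G_{[n]\setminus\{j\}}$ (possibly empty, e.g.\ if $u,v$ already lie in different components there), and set $T=T'\cup\{j\}$. For $i=j$, the vertices $u,v$ lie in different components of $G_{[n]\setminus T}$ by construction, so the criterion holds. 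For $i\in T'$, the minimality of $T'$ yields a $u$--$v$ path in $G_{[n]\setminus\{j\}}$ avoiding $T'\setminus\{i\}$ and hence necessarily passing through $i$; the two neighbors of $i$ along that path lie in the components of $u$ and of $v$ in $G_{[n]\setminus T}$ respectively, again giving the criterion.

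The forward direction is essentially a one-line clique argument, so the main obstacle is the reverse direction, specifically choosing the set $T$ so that the strong-separator condition holds \emph{simultaneously} for $j$ and for every added vertex. The key idea is to separate two non-adjacent neighbors of $j$ (whose existence is equivalent to $j$ being non-free) with a \emph{minimal} cut in $G_{[n]\setminus\{j\}}$, since minimality is precisely what guarantees that each auxiliary vertex $i\in T'$ lies on an alternating $u$--$v$ path and therefore witnesses two components on its own.
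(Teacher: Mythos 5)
Your proof is correct. Both directions check out: the reformulation of condition (2) via ``$c(T\setminus\{i\})<c(T)$ iff $i$ has neighbors in at least two components of $G_{[n]\setminus T}$'' is exactly right (adding $i$ back merges precisely the components meeting $N(i)$), the clique argument for (1)~$\Rightarrow$~(2) is sound since for a free vertex $j$ all of $N(j)$ lies in the unique facet containing $j$ and hence $N(j)\setminus T$ spans a connected subgraph of $G_{[n]\setminus T}$, and in (2)~$\Rightarrow$~(1) the choice of a non-adjacent pair $u\in F_1\setminus F_2$, $v\in F_2\setminus F_1$ together with an inclusion-minimal $u$--$v$ separator $T'$ in $G_{[n]\setminus\{j\}}$ does make $T=T'\cup\{j\}$ satisfy the cut condition simultaneously for $j$ and for every $i\in T'$ (minimality supplies a $u$--$v$ path through $i$ avoiding $T\setminus\{i\}$, whose two neighbors of $i$ land in the components of $u$ and of $v$). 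Note, however, that the paper does not prove this proposition at all: it simply cites Proposition 2.1 of Rauf and Rinaldo \cite{AR}. So your contribution is a self-contained, purely graph-theoretic proof of the quoted result, in the same spirit as the cut-set characterization used in \cite{AR}; what the citation buys the paper is brevity, while your argument buys independence from the reference and makes explicit the key mechanism (minimal separators witnessing the condition $c(T\setminus\{i\})<c(T)$) that the paper otherwise leaves hidden.
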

\begin{proof} For the proof see \cite[Proposition 2.1]{AR}.
\end{proof}
The following lemma tells us the importance of the free vertex.
\begin{lemma}\label{b1}
 Let $G$ be the graph on vertex set $[n]$ with at least one free vertex and $J_{G}$ be its binomial edge ideal. Chose one of its free vertex and label it by $n$. Let $J_{G^\prime}$ denotes the binomial edge ideal of the graph $G^\prime$ by attaching $\{n, n+1\}$ to the graph G. Then $f=x_ny_{n+1}-x_{n+1}y_n$ is regular on $S^\prime/J_{G}$ where $S^\prime=S[x_{n+1},y_{n+1}]$ and we have the following exact sequence of $S^\prime$ modules
 \[0\rightarrow S^\prime /J_{G}(-2)\mathop\rightarrow\limits^f S^\prime /J_{G}\rightarrow S^\prime /J_{G^\prime}\rightarrow 0.\]
where $J_{G^\prime}=(J_{G},f)$
\end{lemma}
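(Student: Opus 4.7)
The plan is to reduce the regularity of $f$ on $S'/J_G$ to the statement that $f$ lies outside every associated prime, and then to combine Lemma \ref{p2}(c) with Proposition \ref{acc} to rule each associated prime out.

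First I would observe that $J_G S'$ is still a reduced ideal: by Lemma \ref{p2}(b) we have $J_G = \bigcap_{T} P_T(G)$ in $S$, and passing to the polynomial extension $S' = S[x_{n+1},y_{n+1}]$ preserves intersections and sends each prime $P_T(G)$ to a prime $P_T(G) S'$ of $S'$ (the extension of a prime ideal to a polynomial ring is prime). Hence
\[
J_G S' = \bigcap_{T\subseteq[n]} P_T(G)S',
\]
with the intersection ranging over the minimal primes of $J_G$, and $\Ass_{S'}(S'/J_G)$ equals the set of minimal primes $P_T(G) S'$ described by Lemma \ref{p2}(c). Consequently $f$ is regular on $S'/J_G$ if and only if $f \notin P_T(G)S'$ for every $T\subseteq [n]$ satisfying the condition of Lemma \ref{p2}(c).

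Next I would analyze when $f = x_n y_{n+1} - x_{n+1} y_n$ can lie in such a $P_T(G)S'$. Since $x_{n+1}$ and $y_{n+1}$ do not appear in any generator of $P_T(G)S'$, viewing $S'$ as $S[x_{n+1},y_{n+1}]$ and expanding $f$ as a polynomial in $x_{n+1},y_{n+1}$ with coefficients in $S$ gives coefficients $-y_n$ (of $x_{n+1}$) and $x_n$ (of $y_{n+1}$). Thus $f\in P_T(G)S'$ if and only if both $x_n$ and $y_n$ lie in $P_T(G)$, which by the definition of $P_T(G)$ happens if and only if $n\in T$. Now I would invoke Proposition \ref{acc}: since $n$ is a free vertex of $\Delta(G)$, $n$ does not belong to any $T$ satisfying the condition of Lemma \ref{p2}(c). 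Hence $f$ is not contained in any associated prime of $J_G S'$, so $f$ is a non-zero-divisor on $S'/J_G$.

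Finally, regularity of $f$ together with the obvious identification $J_{G'} = (J_G, f)$ (because the binomial generators of $J_{G'}$ are those of $J_G$ together with the one binomial attached to the new edge $\{n,n+1\}$) yields the short exact sequence
\[
0\to S'/J_G(-2) \stackrel{f}{\longrightarrow} S'/J_G \to S'/J_{G'}\to 0,
\]
with the degree shift by $-2$ coming from $\deg f = 2$. The main obstacle in this argument is the bookkeeping step of showing $f\in P_T(G)S'$ forces $n\in T$; everything else is either formal or a direct application of the cited results.
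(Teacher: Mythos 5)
Your proposal is correct and follows essentially the same route as the paper: identify the associated primes of $S'/J_G$ as the extensions of the minimal primes $P_T(G)$ from Lemma \ref{p2}, use Proposition \ref{acc} to conclude the free vertex $n$ lies in no admissible $T$, deduce $f\notin P_T(G)S'$, and get the exact sequence from regularity of $f$. The only difference is that you spell out the coefficient argument (that $f\in P_T(G)S'$ forces $x_n,y_n\in P_T(G)$, hence $n\in T$), which the paper leaves implicit.
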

\begin{proof}Since $n$ is free vertex therefore by Proposition \ref{acc} and Lemma \ref{p2} (c) $x_{n},y_{n}$ $\notin P_{T}(G)$
for all $\ P_{T}(G)\in \Ass(S/J_{G}),$ and hence $f$ $\notin P_{T}(G)$
for all $\ P_{T}(G)\in \Ass(S^\prime/J_{G}),$ therefore $f$ is not a zero divisor in $S^\prime/J_{G}$ and is regular. Above exact sequence is easily seen because of the regularity of $f$ on $S^\prime/J_{G}$.
\end{proof}
\begin{definition}\label{d1}$\mathcal{T}_3$ be the collection of graphs such that for all $G\in\mathcal{T}_3$ we have
\[V (G) = \{u_1, \ldots , u_r, v_1, \ldots , v_s,w_1, \ldots ,w_t\}\]
with $r \geq 2, s \geq 1, t \geq 1$ and edge set
\begin{gather*}E(G) =\{\{u_i, u_{i+1}\} : i = 1, \ldots , r - 1\} \cup \{\{v_i, v_{i+1}\} : i = 1, \dots , s - 1\}\\ \cup
�\{\{w_i,w_{i+1}\} : i = 1, \ldots , t - 1\} �\cup \{\{u_1, v_1\}, \{u_1,w_1\}\}.
\end{gather*}

\end{definition}
Note that any $G\in\mathcal{T}_3$ is a tree.
\[\begin{tikzpicture}
	
	\vertex[fill] (1) at (0,0) [label=below:$u_2$] {};
	\vertex[fill] (2) at (1.5,0) [label=below:$u_1$] {};
	\vertex[fill] (4) at (2.5,1) [label=above:$v_1$] {};
	\vertex[fill] (3) at (2.5,-1) [label=below:$w_1$] {};
	\path
		(1) edge (2)
		(3) edge (2)
		(2) edge (4)

	;
\end{tikzpicture}\]
\begin{center}
FIGURE \large{1}
\end{center}

\begin{example}\label{b2}

Consider the simplest example of the graph in $\mathcal{T}_3$ as shown in
figure 1. It is easy to see that $u_2$, $v_1$ and $w_1$ are free vertices of $G$. Its binomial edge ideal has the following Betti diagram
\[
\begin{array}{c|cccc}
   & 0 & 1 & 2 &  3 \\
\hline
0 & 1 & 0 & 0  & 0 \\
1 & 0 & 3 & 0 &  0 \\
2 & 0 & 0 & 4 &  2
\end{array}
\]
and it has the following Hilbert series
\[H(S/J_{G},t)=\frac{1}{(1-t)^{6}}(1+2t-2t^{3}).\]

\end{example}
It is known from \cite[Corollary 3.5]{So} that any $G\in\mathcal{T}_3$ on vertex set $[n]$ is approximately Cohen-Macaulay ring of $\dim(S/J_G)=n+2$.
\begin{remark} We use computer algebra system CoCoA \cite{CO} for the computations of some arithmetic invariants of $S/J_G$ in Example \ref{b2} and \ref{b3}.
\end{remark}
\begin{definition}\label{d2} (see \cite{GR}) $\mathcal{G}_3$ be the collection of graphs such that for all $G\in\mathcal{G}_3$ we have
\[V (G) = \{u_1, \ldots , u_r, v_1, \ldots , v_s,w_1, \ldots ,w_t\}\]
with $r \geq 1, s \geq 1, t \geq 1$ and edge set
\begin{gather*}E(G) =\{\{u_i, u_{i+1}\} : i = 1, \ldots , r - 1\} \cup \{\{v_i, v_{i+1}\} : i = 1, \dots , s - 1\}\\ \cup
�\{\{w_i,w_{i+1}\} : i = 1, \ldots , t - 1\} �\cup \{\{u_1, v_1\}, \{u_1,w_1\}, \{v_1,w_1\}\}.
\end{gather*}
\end{definition}
\begin{example}\label{b3}

The simplest example of the graph in $\mathcal{G}_3$ is complete graph on vertices $u_1$, $v_1$ and $w_1$ and all of them are free vertices of $G$. Its binomial edge ideal has the following Betti diagram
\[
\begin{array}{c|ccc}
   & 0 & 1 & 2  \\
\hline
0 & 1 & 0 & 0  \\
1 & 0 & 3 & 2

\end{array}
\]
and it has the following Hilbert series \[H(S/J_G,t)=\frac{1}{(1-t)^4}(1+2t).\]

\end{example}
It is known from \cite[Proposition 2.5]{GR} that any $G\in\mathcal{G}_3$ on vertex set $[n]$ is Cohen-Macaulay ring of $\dim(S/J_G)=n+1.$ Note that the $\projdim(S/J_G)=n-1$ for any $G\in\mathcal{T}_3\cup\mathcal{G}_3$ on vertex set $[n]$ as it will be shown in the following result.

\begin{theorem}\label{b4} Let $G$ be the graph on vertex set $[n]$ and $G\in\mathcal{T}_3\cup\mathcal{G}_3$. Let $J_G$ denotes its binomial edge ideal then the $\reg(S/J_G)=n-2$ and the Betti diagram of the $S/J_G$ looks like the following:

\[
\begin{array}{c|ccccccccc}

   & 0 & 1  & 2 &  3  &  4  & \cdots & n-2 & n-1  \\
\hline
0 & 1 & 0 & 0 & 0 & 0 & \cdots & 0& 0  \\
1 & 0 & \beta_{1,1} & \beta_{2,1} & 0 & 0 & \cdots & 0 & 0  \\
2 & 0 & 0 & \beta_{2,2} & \beta_{3,2} & 0 & \cdots & 0 & 0  \\
3 & 0 & 0 & 0 & \beta_{3,3} & \beta_{4,3} & \cdots & 0 & 0  \\
\vdots & \vdots & \vdots & \vdots & \vdots & \ddots & \ddots
& \vdots & \vdots\\
n-3 & 0 & 0 & 0 & 0& 0 & \ddots & \ddots & 0  \\
n-2 & 0 & 0 & 0 & 0 & 0 & \cdots & \beta_{n-2,n-2}& \beta_{n-1,n-2}
\end{array}
\]

\end{theorem}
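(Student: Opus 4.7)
The plan is to induct on $n$, with base cases already tabulated: Example \ref{b2} gives the minimal member of $\mathcal{T}_3$ (the $4$-vertex star, $r=2,s=t=1$) and Example \ref{b3} gives the minimal member of $\mathcal{G}_3$ (the triangle, $r=s=t=1$). Both Betti tables match the claimed staircase and satisfy $\reg(S/J_G)=n-2$. Every other $G' \in \mathcal{T}_3 \cup \mathcal{G}_3$ has at least one arm above its minimum length, so pruning the extreme vertex of that arm yields a smaller $G \in \mathcal{T}_3 \cup \mathcal{G}_3$. The neighbor of the pruned vertex sits in a unique maximal clique of $G$ (the last edge of that arm), hence is a free vertex of $G$; after relabelling it as $n$ and the pruned vertex as $n+1$, the pair $(G,G')$ falls under Lemma \ref{b1}.

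That lemma provides the short exact sequence
\[0 \to S'/J_G(-2) \xrightarrow{f} S'/J_G \to S'/J_{G'} \to 0,\qquad f=x_ny_{n+1}-x_{n+1}y_n,\]
with $f\in\mathfrak{m}_{S'}$. Because multiplication by any element of $\mathfrak{m}_{S'}$ annihilates $\Tor^{S'}(K,-)$, the associated long exact sequence degenerates into short exact sequences
\[0\to \Tor_i^{S'}(K,S'/J_G)\to \Tor_i^{S'}(K,S'/J_{G'})\to \Tor_{i-1}^{S'}(K,S'/J_G)(-2)\to 0,\]
and since $J_G$ involves none of the new variables, the Betti numbers of $S'/J_G$ over $S'$ equal those of $S/J_G$ over $S$. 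Extracting graded dimensions yields the clean recurrence
\[\beta_{i,j}(S'/J_{G'}) = \beta_{i,j}(S/J_G) + \beta_{i-1,j-1}(S/J_G).\]

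Applying this recurrence to the inductive hypothesis finishes the job: the second summand shifts the support of the $G$-table by $(+1,+1)$, sending the final-row corner $(n-1,n-2)$ to $(n,(n+1)-2)$, and the union of the shifted and unshifted supports is exactly the staircase claimed for $G'$ on $n+1$ vertices. The regularity climbs by $1$ because $\beta_{n,n-1}(S'/J_{G'})=\beta_{n-1,n-2}(S/J_G)$ is nonzero by induction. The only non-routine point is the vanishing of multiplication-by-$f$ on $\Tor^{S'}(K,-)$, a one-line observation using that $f\in\mathfrak{m}_{S'}$ acts as zero on $K$; the remaining work is bookkeeping, confirming that pruning really keeps us inside $\mathcal{T}_3 \cup \mathcal{G}_3$, which follows directly from the defining inequalities $r\geq 2,\,s,t\geq 1$ for $\mathcal{T}_3$ and $r,s,t\geq 1$ for $\mathcal{G}_3$.
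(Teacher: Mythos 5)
Your proposal is correct, and its skeleton is the same as the paper's: induction on $n$ with Examples \ref{b2} and \ref{b3} as base cases, passage from $G$ to $G'$ by attaching an edge at a free vertex, the short exact sequence of Lemma \ref{b1}, the identification $\Tor_i^{S}(K,S/J_G)\cong\Tor_i^{S'}(K,S'/J_G)$, and finally the additivity $\beta_{i,j}(S'/J_{G'})=\beta_{i,j}(S/J_G)+\beta_{i-1,j-1}(S/J_G)$, which propagates the two-diagonal shape and gives $\reg = n-2$. The one place where you genuinely diverge is the crucial vanishing step: the paper proves that the induced maps $\phi_i\colon \Tor_i^{S'}(K,S'/J_G(-2))\to\Tor_i^{S'}(K,S'/J_G)$ are zero by a degree-by-degree case analysis that leans on the inductive hypothesis (the nonzero entries of the table for $G$ sit only on the diagonals $j=i$ and $j=i-1$, so source and target can never be simultaneously nonzero). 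You instead observe that $\phi_i$ is multiplication by $f\in\mathfrak m_{S'}$ and that $\Tor_i^{S'}(K,-)$ is annihilated by $\mathfrak m_{S'}$, so $\phi_i=0$ unconditionally; equivalently, the mapping cone of $f$ on a minimal resolution of $S'/J_G$ is a minimal resolution of $S'/J_{G'}$. This is the standard and stronger fact: it removes any dependence of the vanishing on the shape of the Betti table, so the induction is needed only to track which diagonals are occupied, and it yields the recurrence for the Betti numbers of $S'/J_{G'}$ for an arbitrary graph $G$ with a free vertex, not just for members of $\mathcal T_3\cup\mathcal G_3$. Your explicit pruning discussion (which arm can be shortened while staying in the class, and why the new end vertex is free) is bookkeeping the paper leaves implicit; the only slight imprecision is that when an arm of a graph in $\mathcal G_3$ is pruned down to length one, the unique maximal clique containing the neighbour is the triangle rather than ``the last edge of that arm,'' but the conclusion that it is a free vertex is still correct.
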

\begin{proof}We want to prove the claim on the Betti table by induction on $n$. For $n=3$ or $n=4$ it is true (see Example \ref{b2} and \ref{b3}). Now  let us assume that the statement is true for $n$. We use the notations of Lemma \ref{b1} because in these classes we inductively go from graph $G$ on $n$ vertices to graph $G^\prime$ on $n+1$ vertices by adding an edge $\{n,n+1\}$ with the assumption that $n$ is free vertex. Let $F_{\bullet}$ be the minimal free resolution for $S/J_G$ then $F_{\bullet}\otimes_S S^\prime$ is minimal free resolution for $S^\prime /J_{G}$ and hence they have the same Betti numbers. To this end we note that
\[\Tor_i^S(K,S /J_{G})\cong \Tor_i^{S^\prime}(K,S^\prime /J_{G})\]
Hence we have the following isomorphism restricted to degree $i+j$.
\[\Tor_i^{S^\prime}(K,S^\prime /J_{G})_{i+j}\cong \Tor_i^S(K,S /J_{G})_{i+j} \cong K^{\beta_{i,j}}(-(i+j)) \]
Now consider the exact sequence of Lemma \ref{b1}
\[0\rightarrow S^\prime /J_{G}(-2)\mathop\rightarrow\limits^f S^\prime /J_{G}\rightarrow S^\prime /J_{G^\prime}\rightarrow 0.\]
The first two modules of the above exact sequence are the same modules with the shift of degree $2$. Next we want to show that the map
\[
\phi_{i}: \Tor_i^{S^\prime}(K,S^\prime /J_{G}(-2)) \to \Tor_i^{S^\prime}(K,S^\prime /J_{G}).
\]
is the zero map. To this end it will be enough to show that
\[
[\phi_{i}]_{j}: \Tor_i^{S^\prime}(K,S^\prime /J_{G})_{i+j-2} \to \Tor_i^{S^\prime}(K,S^\prime /J_{G})_{i+j}.
\]
is zero for all $j$. Now suppose that
\[
0 \neq \Tor_i^{S^\prime}(K,S^\prime /J_{G})_{i+j-2} \cong \Tor_i^{S}(K,S /J_{G})_{i+j-2}.
\]
By induction hypothesis for $n$ it turns out that $(i,j-2)$ is either $(i,i)$ or $(i,i-1)$. In the first case, that is $j-2=i$, the target of $[\phi_{i}]_{j}$ is
\[
\Tor_i^{S^\prime}(K,S^\prime /J_{G})_{2i+2} \cong \Tor_i^{S}(K,S /J_{G})_{2i+2}=0.
\]
In the second case, that is $j-2=i-1$, the target of $[\phi_{i}]_{j}$ is
\[
\Tor_i^{S^\prime}(K,S^\prime /J_{G})_{2i+1}\cong \Tor_i^{S}(K,S /J_{G})_{2i+1}=0.
\]
Now suppose that the target of $[\phi_{i}]_{j}$ namely $\Tor_i^{S^\prime}(K,S^\prime /J_{G})_{i+j}$ is non-zero. Again by induction hypothesis for $n$ it follows that $(i,j)$ is either $(i,i)$ or $(i,i-1)$. In the first case, that is $j=i$, the source of $[\phi_{i}]_{j}$ is
\[
\Tor_i^{S^\prime}(K,S^\prime /J_{G})_{2i-2} \cong \Tor_i^{S}(K,S /J_{G})_{2i-2}=0.
\]
In the second case, that is $j=i-1$, the source of $[\phi_{i}]_{j}$ is
\[
\Tor_i^{S^\prime}(K,S^\prime /J_{G})_{2i-3} \cong \Tor_i^{S}(K,S /J_{G})_{2i-3}=0.
\]
This completes the proof for $\phi_{i}$ is the zero map. Therefore the short exact sequence induces an isomorphism
\[
\Tor_i^{S^\prime}(K,S^\prime /J_{G^\prime}) \cong \Tor_i^{S^\prime}(K,S^\prime /J_{G}) \oplus \Tor_{i-1}^{S^\prime}(K,S^\prime /J_{G}(-2)).
\]
In order to complete the inductive step we have to show that $\Tor_i^{S^\prime}(K,S^\prime /J_{G^\prime})_{i+j}$ is zero for all $(i,j)$ different of $(i,i)$ and $(i,i-1)$. This follows because of
\[
\Tor_i^{S^\prime}(K,S^\prime /J_{G^\prime})_{i+j} \cong \Tor_i^{S^\prime}(K,S^\prime /J_{G})_{i+j} \oplus \Tor_{i-1}^{S^\prime}(K,S^\prime /J_{G})_{i+j-2}.
\]
Note that if $(i,j)\neq(i,i-1)$ and $(i,j)\neq(i,i)$, then $(i-1,j-1)\neq(i-1,i-2)$ and $(i-1,j-1)\neq(i-1,i-1)$.
Moreover we get
\[
\beta_{i,i}(S^\prime /J_{G^\prime}) = \beta_{i,i}(S/J_{G})+ \beta_{i-1,i-1}(S/J_{G}).
\]
and
\[
\beta_{i,i-1}(S^\prime /J_{G^\prime}) = \beta_{i,i-1}(S/J_{G})+ \beta_{i-1,i-2}(S/J_{G}).
\]
Hence any Betti number of $S^\prime /J_{G^\prime}$ is the sum of consecutive Betti numbers of $S/J_{G}$ of its same diagonal. This completes the inductive step.
\end{proof}
The recursion formulas for the Betti numbers at the end of the proof might be used for an explicit computation of them. We will follow here a different approach using Hilbert functions.
\begin{lemma}\label{b5}
    With the notations of Lemma \ref{b1}, we have $$H(S^\prime /J_{G^\prime},t)=(1-t^{2})H(S^\prime /J_{G},t).$$
\end{lemma}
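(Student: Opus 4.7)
The plan is to derive the Hilbert series identity directly from the short exact sequence of graded $S^\prime$-modules provided by Lemma \ref{b1}, namely
\[
0 \to S^\prime/J_G(-2) \xrightarrow{\,f\,} S^\prime/J_G \to S^\prime/J_{G^\prime} \to 0.
\]
The essential input is already encoded in Lemma \ref{b1}: because $n$ is a free vertex of $G$, the binomial $f = x_n y_{n+1} - x_{n+1} y_n$ is a nonzerodivisor on $S^\prime/J_G$, so multiplication by $f$ is injective and the sequence above is genuinely short exact in the category of $\mathbb{Z}$-graded $S^\prime$-modules.

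Given that, the proof reduces to invoking additivity of the Hilbert series on exact sequences of graded modules. First I would note the standard shift formula $H(M(-2), t) = t^2 H(M, t)$, applied to $M = S^\prime/J_G$. Then, since the Hilbert series is additive along short exact sequences of graded modules with degree-zero maps, one has
\[
H(S^\prime/J_{G^\prime}, t) \;=\; H(S^\prime/J_G, t) - H(S^\prime/J_G(-2), t) \;=\; (1 - t^2)\, H(S^\prime/J_G, t),
\]
which is exactly the claimed identity.

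There is no real obstacle here: the only thing to verify is that the map $f$ in Lemma \ref{b1} preserves degrees (it does, since $f$ is homogeneous of degree $2$ and the source is shifted by $-2$) and that $f$ is regular (already established in Lemma \ref{b1} via Proposition \ref{acc} and Lemma \ref{p2}(c)). Once those two points are checked, the formula is immediate from the additivity of Hilbert series, so the proof will be essentially a one-line computation with the appropriate reference to Lemma \ref{b1}.
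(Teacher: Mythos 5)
Your proposal is correct and follows the paper's own argument: the paper likewise derives the identity from the short exact sequence of Lemma \ref{b1} together with additivity of Hilbert series and the shift formula $H(M(-2),t)=t^2H(M,t)$. The only difference is that you spell out the degree bookkeeping and the regularity of $f$ explicitly, which the paper leaves implicit.
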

\begin{proof}
Consider the exact sequence \[0\rightarrow S^\prime /J_{G}(-2)\mathop\rightarrow\limits^f S^\prime /J_{G}\rightarrow S^\prime /J_{G^\prime}\rightarrow 0.\]Hence we have a required result.
\end{proof}
\begin{lemma}\label{b6} Let $G$ be a graph on vertex set $[n]$ and $J_G$ be its binomial edge ideal.
\begin{enumerate}
  \item[(a)] Let $G\in\mathcal{T}_3$ then the Hilbert series is
  \[ H(S/J_{G},t)=\frac{1}{(1-t)^{n+2}}(1+2t-2t^{3})(1+t)^{n-4}\text{ \ for }n>3.\]
  \item[(b)] Let $G\in\mathcal{G}_3$ then the Hilbert series is
  \[ H(S/J_{G},t)=\frac{1}{(1-t)^{n+1}}(1+2t)(1+t)^{n-3}\text{ \ for }n>2.\]
\end{enumerate}
\end{lemma}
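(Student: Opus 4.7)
The plan is to prove both parts by induction on $n$, with the inductive step supplied by the recursion of Lemma \ref{b5}.

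For part (a), the base case $n=4$ is the unique member of $\mathcal{T}_3$ with $r=2$, $s=t=1$, namely the graph of Figure 1. Its Hilbert series is recorded in Example \ref{b2} as $(1+2t-2t^3)/(1-t)^6$, which agrees with the claimed expression because the factor $(1+t)^{n-4}$ equals $1$. For the inductive step, fix $G\in\mathcal{T}_3$ on $n$ vertices and let $G'\in\mathcal{T}_3$ be obtained from $G$ by extending one of its three arms by a single new vertex; every graph in $\mathcal{T}_3$ on $n+1$ vertices arises in this way from a smaller member (given $r+s+t \geq 5$, at least one of $r\geq 3$, $s\geq 2$, $t\geq 2$ must hold, so one arm can be shortened while staying in $\mathcal{T}_3$). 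The endpoint $v$ of the extended arm has degree one in $G$, so the unique edge incident with $v$ is the only maximal clique containing $v$; thus $v$ is a free vertex of $\Delta(G)$, and after relabelling it as vertex $n$ we are in the setup of Lemma \ref{b1}.

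Lemma \ref{b5} then gives $H(S^\prime/J_{G'},t)=(1-t^2)\,H(S^\prime/J_G,t)$. Since $S^\prime=S[x_{n+1},y_{n+1}]$ is a polynomial extension of $S$ in two algebraically independent variables, $H(S^\prime/J_G,t)=(1-t)^{-2}H(S/J_G,t)$, and combining yields the recursion
\[
H(S^\prime/J_{G'},t)=\frac{1+t}{1-t}\,H(S/J_G,t).
\]
Substituting the inductive hypothesis and tracking the exponents ($(n+1)+2=n+3$ and $(n+1)-4=n-3$) produces exactly the claimed formula at the next level.

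Part (b) proceeds identically: the base case is the triangle $n=3$ (Example \ref{b3}), matching the formula since $(1+t)^{n-3}=1$. Every graph in $\mathcal{G}_3$ on $n+1 \geq 4$ vertices is obtained from a smaller one by extending an arm at its degree-one endpoint, which is again a free vertex; the same recursion then propagates the formula, where now $(n+1)+1=n+2$ and $(n+1)-3=n-2$. The only point that requires verification — and the closest thing to an obstacle — is that the chosen arm endpoint is genuinely a free vertex so that Lemma \ref{b1} applies, but this is immediate from its having degree one in the ambient graph. Everything else is a routine simplification of rational functions.
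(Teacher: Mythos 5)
Your proof is correct and is essentially the paper's own argument: induction on $n$ with base cases given by Examples \ref{b2} and \ref{b3}, and the inductive step carried by Lemma \ref{b5} together with $H(S^\prime/J_G,t)=(1-t)^{-2}H(S/J_G,t)$; you merely make explicit the structural fact (left implicit in the paper) that every member of $\mathcal{T}_3$ or $\mathcal{G}_3$ arises by extending an arm at a free vertex of a smaller member. One small correction for part (b): the attachment vertex need not have degree one — if the arm being extended has length $1$ in the smaller graph (e.g.\ passing from the triangle to $r=2$, $s=t=1$), the attachment point is a vertex of the triangle $\{u_1,v_1,w_1\}$ and has degree at least $2$ — but it is still a free vertex because it lies in exactly one maximal clique, so Lemma \ref{b1} applies and your induction goes through unchanged.
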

\begin{proof}
We will prove (a) by induction on $n$. For $n=4$ it is true, see Example \ref{b2}. Suppose the claim is true for $n.$ That is,
 $$H(S^\prime /J_{G},t)=\frac{1}{(1-t)^{n+4}}(1+2t-2t^{3})(1+t)^{n-4}.$$ Now by previous lemma we have
\begin{equation*}
H(S^\prime /J_{G^\prime},t)=\frac{1}{(1-t)^{n+3}}(1+2t-2t^{3})(1+t)^{n-3}
\end{equation*}%
as required.

Similar arguments might be used in order to calculate the Hilbert series
in (b).
\end{proof}
\begin{theorem}\label{b7} Let $G$ be a graph on vertex set $[n]$ and $J_G$ be its binomial edge ideal.
\begin{enumerate}
  \item[(a)] Let $G\in\mathcal{T}_3$ then the Betti numbers for $S/J_G$ are:
  \begin{eqnarray*}
\beta_{i,j}=\left\{\begin{array}{ll}
\binom{n-4}{i}+3\binom{n-4}{i-1}+4\binom{n-4}{i-2}, & \hbox{if \, $j = i=0,\ldots,n-2$ ;}\\
2\binom{n-4}{i-3}, & \hbox{if \, $j=i-1=2,\ldots,n-2$ ;}\\
 0, & \hbox{ \, otherwise .}

\end{array}\right.
\end{eqnarray*}
  \item[(b)] Let $G\in\mathcal{G}_3$ then the Betti numbers for $S/J_G$ are:
  \begin{eqnarray*}
\beta_{i,j}=\left\{\begin{array}{ll}
3\binom{n-3}{i-1}+\binom{n-3}{i}, & \hbox{if \, $j = i=0,\ldots,n-2$ ;}\\
2\binom{n-3}{i-2}, & \hbox{if \, $j=i-1=1,\ldots,n-2$ ;}\\
 0, & \hbox{ \, otherwise .}

\end{array}\right.
\end{eqnarray*}
  \end{enumerate}
\end{theorem}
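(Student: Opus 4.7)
The plan is to extract the Betti numbers directly from the Hilbert series of Lemma \ref{b6}, using Theorem \ref{b4} to separate the contributions of the two non-trivial diagonals of the Betti table.

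By Theorem \ref{b4} the only possibly non-zero Betti numbers of $S/J_G$ lie on $j=i$ or $j=i-1$, so writing the Hilbert series in the standard form $H(S/J_G,t)=K(t)/(1-t)^{2n}$ and reading off the $K$-polynomial from the alternating sum of twists in a minimal free resolution yields
\[
K(t)=\sum_{i\geq 0}(-1)^i\beta_{i,i}(S/J_G)\, t^{2i}+\sum_{i\geq 1}(-1)^i\beta_{i,i-1}(S/J_G)\, t^{2i-1}.
\]
Hence the even-degree part of $K(t)$ records the main diagonal and the odd-degree part records the sub-diagonal, each up to the sign $(-1)^i$. So the whole problem reduces to computing $K(t)$, splitting it into even and odd parts, and extracting coefficients by the binomial theorem.

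For part (a), Lemma \ref{b6}(a) gives
\[
K(t)=(1+2t-2t^3)(1+t)^{n-4}(1-t)^{n-2}=(1+2t-2t^3)(1-t)^2(1-t^2)^{n-4},
\]
and a direct expansion shows $(1+2t-2t^3)(1-t)^2=1-3t^2+4t^4-2t^5$. Applying the binomial theorem to $(1-t^2)^{n-4}$ and picking out the coefficient of $t^{2i}$ in $(1-3t^2+4t^4)(1-t^2)^{n-4}$ produces
\[
\beta_{i,i}=\binom{n-4}{i}+3\binom{n-4}{i-1}+4\binom{n-4}{i-2},
\]
while the coefficient of $t^{2i-1}$ in $-2t^5(1-t^2)^{n-4}$ produces $\beta_{i,i-1}=2\binom{n-4}{i-3}$. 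Part (b) is handled identically, starting from $K(t)=(1+2t)(1-t)^2(1-t^2)^{n-3}$ and using $(1+2t)(1-t)^2=1-3t^2+2t^3$ to read off the stated formulas.

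The computation is essentially routine once Theorem \ref{b4} and Lemma \ref{b6} are in hand; the only mild subtlety is the small-$n$ boundary. For $\mathcal{T}_3$ the Hilbert series formula assumes $n\geq 4$ and the base case $n=4$ is exactly Example \ref{b2}, while for $\mathcal{G}_3$ the base case $n=3$ is Example \ref{b3}. In both cases the resulting binomial coefficients vanish automatically for irrelevant indices (using the convention $\binom{m}{k}=0$ for $k<0$), so the stated formulas hold for all admissible $n$. I do not foresee any real obstacle: the genuine work has already been done in Theorem \ref{b4}, which forces the Betti table to be two rows thick, and in Lemma \ref{b6}, which already gives a clean factorization of the Hilbert series; what remains is purely a bookkeeping of coefficients in a product of a small fixed polynomial with $(1-t^2)^{n-c}$.
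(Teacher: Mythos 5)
Your proposal is correct and follows exactly the paper's route: the paper likewise combines the two-diagonal structure of the Betti table from Theorem \ref{b4} with the Hilbert series of Lemma \ref{b6}, writes $H(S/J_G,t)=\frac{1}{(1-t)^{2n}}\bigl(\sum_i(-1)^i\beta_{i,i}t^{2i}+\sum_i(-1)^i\beta_{i,i-1}t^{2i-1}\bigr)$, and extracts the coefficients. Your explicit factorizations $(1+2t-2t^3)(1-t)^2=1-3t^2+4t^4-2t^5$ and $(1+2t)(1-t)^2=1-3t^2+2t^3$ just carry out the ``few simple computations'' the paper leaves to the reader, and they check out.
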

\begin{proof}
By the additivity of the Hilbert series and the structure of the Betti table (see in the above Theorem \ref{b4}) provides the following formula
\[
H(S/J_G,t) = \frac{1}{(1-t)^{2n}}\left(\sum_{i=0}^{n-2} (-1)^i\beta_{i,i} t^{2i}+\sum_{i=2}^{n-1} (-1)^{i}\beta_{i,i-1} t^{2i-1}\right).
\]
and after comparing the Hilbert series of Lemma \ref{b6} (a) and by making a few simple computations we have the formulas for the Betti numbers in (a). Similar computation using the Hilbert series of Lemma \ref{b6} (b) gives the Betti numbers in (b).
\end{proof}
\begin{corollary}\label{xy} Let G be any arbitrary graph on vertex set $[n]$. Suppose that $G$ has an induced subgraph $H\in\mathcal{T}_3\cup\mathcal{G}_3$ on $k$ vertices. Then $\reg(S/J_G)\geq k-2$.
\end{corollary}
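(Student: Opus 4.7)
The plan is to deduce this from the combination of two results already established in the paper: Theorem \ref{p4} (the Matsuda--Murai monotonicity of Betti numbers under passage to induced subgraphs) and Theorem \ref{b4} (the computation of $\reg(S/J_H)$ for graphs $H\in\mathcal{T}_3\cup\mathcal{G}_3$). The argument will mirror the proof of Corollary \ref{xx} for the cycle case; only the role of the cycle is played here by a member of $\mathcal{T}_3\cup\mathcal{G}_3$.

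First I would let $H$ denote the given induced subgraph of $G$ on $k$ vertices, with $H\in\mathcal{T}_3\cup\mathcal{G}_3$. By Theorem \ref{b4}, applied to $H$ (which is on $k$ vertices), we have $\reg(S/J_H) = k-2$. Concretely, the Betti table of $S/J_H$ has a nonzero entry in row $k-2$; for instance, $\beta_{k-1,k-2}(S/J_H) \neq 0$ (the last entry on the bottom row of the Betti diagram displayed in Theorem \ref{b4}). Next, Theorem \ref{p4} gives $\beta_{i,j}(S/J_G) \geq \beta_{i,j}(S/J_H)$ for all $i,j$, so the nonvanishing of $\beta_{k-1,k-2}(S/J_H)$ forces $\beta_{k-1,k-2}(S/J_G) \neq 0$ as well. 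By the definition of the Castelnuovo--Mumford regularity this implies $\reg(S/J_G) \geq k-2$, which is exactly the inequality claimed.

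There is no real obstacle: the only thing to be careful about is that the polynomial rings in which $J_H$ and $J_G$ are considered are different (one has $2k$ variables, the other $2n$), but Theorem \ref{p4} is stated so as to handle this matching of Betti numbers between a graph and its induced subgraph, so the comparison of Betti numbers is legitimate without further bookkeeping. Thus the proof reduces to one sentence citing Theorems \ref{p4} and \ref{b4}, in complete analogy with Corollary \ref{xx}.
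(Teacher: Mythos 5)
Your proposal is correct and is exactly the paper's argument: the paper also deduces the corollary directly from Theorem \ref{p4} together with Theorem \ref{b4}. The only cosmetic point is that you do not even need the specific entry $\beta_{k-1,k-2}$; the equality $\reg(S/J_H)=k-2$ already guarantees some nonzero Betti number in row $k-2$, and Theorem \ref{p4} then transfers its nonvanishing to $S/J_G$.
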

\begin{proof} It is an easy consequence of Theorem \ref{p4} and \ref{b4}.
\end{proof}

\begin{remark}  Let $G$ denote a graph with the largest $k$ such that $G$ has an induced
subgraph $H \in\mathcal{T}_3\cup\mathcal{G}_3$. Then it has also a line $L$ as an induced subgraph
with $\ell = \max \{s+t+1,r+t,r+s\}$ resp. $\ell = \max \{r+s,r+t,s+t\}$ vertices.
In general $k > \ell$, so
that the lower bound for the Castelnuovo-Mumford regularity in Corollary \ref{xy} improves those of \cite{MM}.
\end{remark}

\end{document}